\theoremstyle{definition}
\newtheorem{theoreme}{Theorem}
\newtheorem{prop}[theoreme]{Proposition}
\newtheorem{lem}[theoreme]{Lemma}
\newtheorem{cor}[theoreme]{Corollary}
\newtheorem{defi}[theoreme]{Definition}
\newtheorem{ex}[theoreme]{Example}
\newenvironment{rem}{\emph{Remark:}}{}
\newtheorem{thmB}{Conjecture}
\newcommand{\Sum}{\mathlarger{\sum}}
\newcommand{\gcdd}[2] {\operatorname{gcd}\left({#1},{#2}\right)}
\newcommand{\Z} {\mathbb{Z}}
\newcommand{\N} {\mathbb{N}}
\newcommand{\Q} {\mathbb{Q}}
\newcommand{\F} {\mathbb{F}}
\newcommand{\ens}{ \ \vert \ }
\newcommand{\noleftdelimiter}{\left.\kern-\nulldelimiterspace}
\begin{document}
\author{Alexis Lucas}

\address{
Normandie Université, 
Université de Caen Normandie - CNRS, 
Laboratoire de Mathématiques Nicolas Oresme (LMNO), UMR 6139,  
14000 Caen, France.
}
\email{alexis.lucas@unicaen.fr}
\title{ Wieferich and Mersenne primes for function fields}

\date{\today}


\begin{abstract}
  We study properties of recently introduced Wieferich primes for  Drinfeld modules, as their relation with Fermat equations and finitess or non-finiteness of their number. We also introduce Mersenne numbers for Drinfeld modules, and study the links between these two notions.
\end{abstract}

\maketitle
\tableofcontents

\section{Introduction}
We know from Fermat's little theorem that for any prime number $p$ and any integer $a$ not divisible by $p$, we have 
$$a^{p-1}\equiv 1 \pmod{p^2}.$$ We say that $p$ is Wieferich in base $a$ if 
$$a^{p-1}\equiv 1\pmod{p^2}.$$
This notion appeared in 1909 in the work of A. Wieferich \cite{wieferich1909letzten}, who showed that if there exists a prime number $p$ such that Fermat's equation 
$$x^p+y^p=z^p$$ has a solution $(x,y,z)\in \Z^3$ with $p\nmid xyz$, then $p$ is Wieferich in base $2$. This notion has been studied in this context of ``elementary arithmetic'', but has also been developed in the context of elliptic curves \cite{voloch2000elliptic}, and more recently in the context of number fields \cite{fellini2025wieferichprimesnumberfields}. As might be expected from the analogies between number fields and function fields or Drinfeld modules and elliptic curves, this notion was developed by D. Thakur \cite{thakurwieferich} for the Carlitz module, also studied in \cite{bamunoba2017note,quan2014carlitz}.Recently introduced by the author, X. Caruso and Q. Gazda for general Drinfeld modules \cite{cgl}, surprisingly, this notion has been linked to the special values of certain zeta functions of Drinfeld modules.

Another object that has been extensively studied is that of Mersenne numbers. In the classical case, a prime number is said to be a Mersenne prime if it is of the form $2^p-1$, where $p$ is a prime number. This concept was developed in the context of function fields for the Carlitz module in \cite{quan2014carlitz}.

In this paper, we introduce the notion of Mersenne primes for general Drinfeld modules, and discuss the purely arithmetic properties of these two notions of Wieferich primes and Mersenne primes.\\

Let us give more precise statements of our results.

Let $\theta$ be an indeterminate, $q=p^e$ be the power of a prime number $p$, $\F_q$ be the finite field with $q$ elements, and $A=\F_q[\theta]$ be the polynomial ring in the variable $\theta$ whose fraction field is denoted by $K=\F_q(\theta)$.

Let $\tau:K\rightarrow K, a\mapsto a^q$ be the Frobenius map. We then consider $A\{\tau\}$ to be the ring of  twisted polynomials in the variable $\tau$, subject to the multiplication rule 
$$\tau a=a^q\tau, \forall a\in A.$$

A Drinfeld $A$-module defined over $A$ (shortly a Drinfeld module in the following) is an $\F_q$-algebra homomorphism $\phi:A\rightarrow A\{\tau\}$, completely determined by the image of $\theta$ that satisfies
$$\phi_\theta=\theta\tau^0+\Sum\limits_{i=1}^r a_i\tau^i,$$ with $a_r\neq 0$ and $r\geq 1$ called the rank of $\phi$.

In the following, we do not distinguish the notion of prime polynomial and irreducible polynomial.
For any irreducible unitary polynomial $P$ of $A$, we can consider the finite $A$-module $\phi(A/PA)$, which is equal to $A/PA$ as a set, but whose $A$-module structure is given by $\phi$:
$$a.m=\phi_a(m), \forall a\in A, m\in A/PA.$$ Let us then denote by $\operatorname{Fitt}_A\phi(A/PA)$ its Fitting ideal, which is a non-zero ideal of $A$, and by $g_{P,\phi}$ its monic generator. In particular, according to \cite{fitting1}, the Fitting ideal satisfies
$$\operatorname{Fitt}_A \phi(A/PA)\subseteq \operatorname{Ann}_A \phi(A/PA)=\{ m\in A \ens \phi_m(a)\in PA, \forall a\in A\},$$ from which we obtain 
$$\phi_{g_{P,\phi}}(a)= 0 \pmod{P}, \forall a\in A.$$ This is analogous to Fermat's little theorem for Drinfeld modules.

\begin{defi}We say that a monic prime $P$ is $\phi$-Wieferich in base $a\in A$ if
$$\phi_{g_{P,\phi}}(a)= 0 \pmod{P^2}.$$
We say that $P$ is super $\phi$-Wieferich in base $a$ if
$$\phi_{g_{P,\phi}}(a)=0\pmod{P^3}.$$
\end{defi}

In Section \ref{section:wieferich}, we first prove a weak version of A. Wieferich's theorem. If $P$ is such that there exists a solution $(x,y,z)\in A^3$ to the Fermat equation 
$$y^{q^{r\deg P}}\phi_P\left(\dfrac{x}{y}\right)=z^{q^{r\deg P}}$$ with $P\nmid xyz$, then $P$ is $\phi$-Wieferich in a certain base $a$, see Theorem \ref{th:fermatdrinfeld}. We then prove the non-finiteness of Wieferich primes using the ABC-theorem for function fields and under a strong assumption of squared divisors of some quantities. We then show that, if $P$ is super $\phi$-Wieferich in base $a$ for a finite number of $P$, then $P$ is not $\phi$-Wieferich in base $a$ for an infinite number of $P$, see Theorem \ref{th:granvilledrinfeld}.

In section \ref{sec:Mersenne}, we introduce $\phi$-Mersenne numbers in base $a$, which are polynomials of the form $\phi_P(a)$ with $P$ a prime polynomial, and study the relationship between their primality and $\phi$-Wieferich numbers in base $a$. In particular, assuming the primality of $g_{P,\phi}$ for an infinite number of $P$, we prove that there are infinitely many Mersenne numbers that are not prime polynomials.

\section{Wieferich primes for Drinfeld modules}\label{section:wieferich}
This section is devoted to the study of Wieferich primes in the context of Drinfeld modules.

\subsection{Wieferich primes: first properties}
We keep the notation from the introduction. In particular, $P$ is an irreducible monic polynomial of $A=\F_q[\theta]$, and $\phi$ is a Drinfeld module of rank $r$. We denote by $\deg a$ the degree in $\theta$ of a polynomial $a\in A$, with $\deg0=-\infty$. We will keep as a guiding example the case of the Carlitz module, studied in \cite{bamunoba2017note,quan2014carlitz,thakurwieferich}, for which we generalize many results.

We denote by 
$$\phi(A)_{\operatorname{Tors}}=\{a\in A\ens \exists c\in A-\{0\}, \phi_c(a)=0\}.$$
We have the following way to compute $g_{P,\phi}$:
$$g_{P,\phi}=\det_{\F_q[X]}\left(X-\phi_\theta \ens A/PA\right)_{|X=\theta}.$$
We deduce that $g_{P,\phi}$ is a monic polynomial of degree the $\F_q$-dimension of $A/PA$, that is equal to $\deg P$.
We denote in the following for all monic irreducible polynomial $P$:
$$P=g_{P,\phi}+r_{P,\phi}$$ where $r_{P,\phi}\in A$, $\deg r_{P,\phi}<\deg P.$ So Fermat's little theorem can be rewritten as
$$\phi_P(a)= \phi_{r_{P,\phi}}(a)\pmod{P}\ \forall a\in A,$$ and the condition for being $\phi$-Wieferich in base $a$ can be rewritten as
$$\phi_P(a)= \phi_{r_{P,\phi}}(a)\pmod{P^2}.$$

We have an equivalent characterization for a prime ideal $P$ to be Wieferich, see \cite{cgl}, which we recall here. We fix a polynomial $a\in A$.

\begin{defi}\label{def:pi}\label{def:wieferich}
For a given ideal $I=mA$ of $A$, where $m\in A$, we define
\[
\pi_a(\phi,m)=\pi_a(\phi;I) = \ker( A \to A/I, \, b \mapsto \phi_b(a) )=\{b\in A\ens \phi_b(a)\in I\}.
\]

\end{defi}

We recall the following properties that can be found in \cite[Proposition 2.8, Proposition 2.11]{cgl}. We consider the following hypothesis.
\begin{enumerate}[leftmargin=10ex,label=$(H)_{P}$]
\item\label{Hyp}: \, if $q=2$, then $\deg P>1$.
\end{enumerate}
\begin{prop}\label{prop:properties-of-pi} Let $P$ be a monic prime ideal of $A$ satisfying \ref{Hyp}.
\begin{enumerate}[label=$(\arabic*)$]
\item\label{point1} For any non-zero ideal $I$ of $A$, $\pi_a(\phi,I)$ is a non-zero ideal of $A$.
\item We have $\pi_a(\phi,I)=A$ if and only if $a\in I$.
\item\label{item:divide} $\operatorname{Fitt}_A(\phi(A/I))\subseteq \pi_a(\phi;I)$.
\item\label{item:tour} For all $k\geq 2$, 
  $\pi_a(\phi;P^k)$ is either equal to $\pi_a(\phi;P^{k-1})$ or equal to $P\pi_a(\phi;P^{k-1})$. Furthermore, if $\pi_a(\phi;P^k)=P\pi_a(\phi;P^{k-1})$, then $$\pi_a(\phi;P^{k+1})=P\pi_a(\phi;P^{k}).$$
\item We have $\pi_a(\phi,P^k)=\pi_a(\phi,P)$ for all $k\geq 1$ if and only if $a\in \phi(A)_{\operatorname{Tors}}$.
\item Let $k$ be the largest integer such that $\pi_a(\phi,P)=\pi_a(\phi,P^k)$. Then
$$v_P\left(\phi_{g_{P,\phi}}(a)\right)=k+1.$$

\end{enumerate}
\end{prop}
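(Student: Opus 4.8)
The plan is to derive everything from the additive, $A$-linear evaluation map $\operatorname{ev}_a\colon A\to A$, $b\mapsto\phi_b(a)$, which satisfies $\operatorname{ev}_a(bc)=\phi_b(\operatorname{ev}_a(c))$, so that $\pi_a(\phi;I)=\operatorname{ev}_a^{-1}(I)$ for every ideal $I$. For $(1)$: each monomial $c_iy^{q^i}$ of an additive polynomial $\phi_c$ sends $mA$ into $mA$ (since $q^i\ge 1$), hence $\phi_c(I)\subseteq I$ for $I=mA$ and $\pi_a(\phi;I)$ is an ideal; it is non-zero because $A/\pi_a(\phi;I)$ embeds into the finite ring $A/I$. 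Point $(2)$ is immediate from $\phi_1=\operatorname{id}$. Point $(3)$ follows from the chain $\operatorname{Fitt}_A\phi(A/I)\subseteq\operatorname{Ann}_A\phi(A/I)=\{b\in A:\phi_b(x)\in I\ \forall x\in A\}\subseteq\pi_a(\phi;I)$, the first inclusion being the one recalled in the introduction.

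For $(4)$, write $\phi_P=P\tau^0+\sum_{i\ge 1}c_i\tau^i$; its $\tau^0$-coefficient is $P$ because $\phi$ is a ring homomorphism. The inclusion $\pi_a(\phi;P^k)\subseteq\pi_a(\phi;P^{k-1})$ is clear. Conversely, if $b\in\pi_a(\phi;P^{k-1})$, write $\phi_b(a)=P^{k-1}u$; then $\phi_{Pb}(a)=\phi_P(\phi_b(a))=P^ku+\sum_{i\ge 1}c_iP^{(k-1)q^i}u^{q^i}$, and $(k-1)q^i\ge k$ for all $i\ge 1$ when $k\ge 2$, so $\phi_{Pb}(a)\in P^kA$. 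Thus $P\pi_a(\phi;P^{k-1})\subseteq\pi_a(\phi;P^k)\subseteq\pi_a(\phi;P^{k-1})$, and since the quotient $\pi_a(\phi;P^{k-1})/P\pi_a(\phi;P^{k-1})$ is one-dimensional over the field $A/PA$, the ideal $\pi_a(\phi;P^k)$ equals one of the two ends. For the ``furthermore'' clause, assume $\pi_a(\phi;P^k)=P\pi_a(\phi;P^{k-1})$ and let $b$ generate $\pi_a(\phi;P^{k-1})$; then $v_P(\phi_b(a))=k-1$ exactly (it is $\ge k-1$ and $b\notin\pi_a(\phi;P^k)$), so $\phi_b(a)=P^{k-1}u$ with $P\nmid u$, and the same computation gives $v_P(\phi_{Pb}(a))=k$ provided $(k-1)q^i\ge k+1$ for $i\ge 1$ — true for $k\ge 2$ except when $q=2$ and $k=2$, the case isolated by hypothesis \ref{Hyp}. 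Hence $Pb$ generates $\pi_a(\phi;P^k)$ and is not in $\pi_a(\phi;P^{k+1})$, forcing $\pi_a(\phi;P^{k+1})=P\pi_a(\phi;P^k)$.

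Point $(5)$ is then formal: if $a\in\phi(A)_{\operatorname{Tors}}$, choose $c\ne 0$ with $\phi_c(a)=0$, so $c\in\pi_a(\phi;P^k)$ for every $k$; by the ``furthermore'' clause the chain $(\pi_a(\phi;P^k))_k$, once it strictly decreases, is eventually of the form $P^j\pi_a(\phi;P^{k_0})$, whose intersection over $j$ is $\{0\}$ — impossible as it contains $c\ne 0$ — so the chain is constantly $\pi_a(\phi;P)$. Conversely, if $\pi_a(\phi;P^k)=\pi_a(\phi;P)$ for all $k$ and $b$ generates $\pi_a(\phi;P)\ne 0$, then $\phi_b(a)\in\bigcap_kP^kA=\{0\}$ with $b\ne 0$, so $a$ is $\phi$-torsion.

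For the last point $(6)$, the key remark is that $g_{P,\phi}\in\pi_a(\phi;P^j)\iff\phi_{g_{P,\phi}}(a)\in P^jA\iff v_P(\phi_{g_{P,\phi}}(a))\ge j$, so $v_P(\phi_{g_{P,\phi}}(a))$ is the largest $j$ with $g_{P,\phi}\in\pi_a(\phi;P^j)$. By $(3)$, $g_{P,\phi}\in\pi_a(\phi;P)$, and by the definition of $k$ together with $(4)$ this means $g_{P,\phi}\in\pi_a(\phi;P^k)=\pi_a(\phi;P)$ while $\pi_a(\phi;P^{k+1})=P\pi_a(\phi;P)$. Writing $\pi_a(\phi;P)=bA$ with $b$ monic, one has $b\mid g_{P,\phi}$ because $g_{P,\phi}A=\operatorname{Fitt}_A\phi(A/P)\subseteq bA$; say $g_{P,\phi}=bh$. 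Then $g_{P,\phi}\in P\pi_a(\phi;P)=PbA$ would force $P\mid h$, which is impossible when $\deg b\ge 1$ since then $\deg h=\deg g_{P,\phi}-\deg b=\deg P-\deg b<\deg P$. This pins down the last level of the filtration $(\pi_a(\phi;P^j))_j$ containing $g_{P,\phi}$ and hence the value of $v_P(\phi_{g_{P,\phi}}(a))$. I expect the main obstacle to be the careful bookkeeping in this last step, together with the degenerate situations: $a\in\phi(A)_{\operatorname{Tors}}$ (no finite $k$, and $\phi_{g_{P,\phi}}(a)=0$, so both sides are infinite); $a\in PA$, where $b=1$ and one must verify $P\nmid g_{P,\phi}$ by hand (using that $\phi$ has positive rank, so $\operatorname{Fitt}_A\phi(A/P)\ne PA$, together with \ref{Hyp}); and the exceptional small values of $q$ and $\deg P$.
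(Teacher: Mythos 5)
The paper gives no proof of this proposition (it is recalled from \cite{cgl}), so your attempt has to stand on its own. Points $(1)$--$(3)$, $(5)$ and the dichotomy in $(4)$ are correct and are the expected arguments. Two genuine problems remain. First, in the ``furthermore'' clause of $(4)$ your inequality $(k-1)q^i\ge k+1$ fails for $q=2$, $i=1$, $k=2$, and you wave this away as ``the case isolated by hypothesis \ref{Hyp}''. But \ref{Hyp} does \emph{not} exclude $q=2$; it only excludes $q=2$ together with $\deg P=1$. For $q=2$ and $\deg P\ge 2$ the hypothesis holds and the claim must still be proved, whereas your computation only gives $v_P(\phi_{Pb}(a))\ge k$: the term $c_1(P^{k-1}u)^{q}$ has valuation $2(k-1)+v_P(c_1)=k+v_P(c_1)$ when $q=2,k=2$, which is not $\ge k+1$ unless $P\mid c_1$. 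That divisibility is exactly what \ref{Hyp} buys and what your proof is missing: writing $\phi_P=P\tau^0+c_1\tau+\cdots$, one checks that $c_1=a_1\bigl(P(\theta^q)-P\bigr)/(\theta^q-\theta)=a_1P\bigl(P^{q-1}-1\bigr)/(\theta^q-\theta)$, and since $\theta^q-\theta$ is a product of the linear monic polynomials, $P\mid c_1$ as soon as $\deg P>1$. With this extra factor of $P$ the valuation count goes through for $q=2$ as well.

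Second, your bookkeeping in $(6)$, pushed to its conclusion, shows $g_{P,\phi}\in\pi_a(\phi;P^k)$ while $g_{P,\phi}\notin\pi_a(\phi;P^{k+1})=P\pi_a(\phi;P)$ (the degree argument with $g_{P,\phi}=bh$, $\deg h<\deg P$), which yields $v_P\bigl(\phi_{g_{P,\phi}}(a)\bigr)=k$, not the asserted $k+1$; yet you stop at ``hence the value of $v_P(\cdots)$'' without stating what that value is. As a proof of the displayed formula the argument therefore does not close. In fact a direct Carlitz check ($q=3$, $P=\theta$, $a=1$, $g_{P,C}=\theta-1$, $C_{\theta-1}(1)=\theta$, so $v_P=1$ while $k=1$) confirms that your computation gives the right answer and that the formula as transcribed here is off by one relative to the indexing of the source; you should have noticed the contradiction and said so explicitly rather than trailing off. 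The degenerate cases you list at the end ($a$ torsion, $a\in PA$) are only gestured at, but they are minor next to these two points.
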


\begin{defi}We denote by $\mathscr{P}_{\phi,P,a}$ the monic generator of $\pi_a(\phi,P)$, which we call the Drinfeld annihilator of $P$ in base $a$. It is therefore characterized by
$$\phi_m(a)= 0\pmod{P}\Leftrightarrow \mathscr{P}_{\phi,P,a}\vert m,  m\in A.$$
\end{defi}
In particular, point \ref{item:divide} of Proposition \ref{prop:properties-of-pi} gives the following relation for any prime $P$ satisfying \ref{Hyp}:
$$\mathscr{P}_{\phi,P,a}\vert g_{P,\phi}, \forall a\in A,$$ and $P$ is $\phi$-Wieferich in base $a$ if and only if $\pi_a(\phi,P)=\pi_a(\phi,P^2)$.

For $a,b\in A$, we denote by $\gcdd{a}{b}$ the greatest common factor of $a$ and $b$. We introduce the following lemma that will be usefull in the following.
\begin{lem}\label{pgcddrinfeld} For all non-zero polynomials $u,v\in A$, we have 
$$\gcdd{\phi_u(a)}{\phi_v(a)}=\phi_{\gcdd{u}{v}}(a).$$
\end{lem}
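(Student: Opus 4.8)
The plan is to establish separately the two divisibilities
$$\phi_{\gcdd{u}{v}}(a)\mid\gcdd{\phi_u(a)}{\phi_v(a)}\quad\text{and}\quad \gcdd{\phi_u(a)}{\phi_v(a)}\mid\phi_{\gcdd{u}{v}}(a);$$
since $A$ is a domain and a gcd is only determined up to a unit of $\F_q^\times$, together they yield the asserted equality.

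Everything rests on one elementary observation: for every $w\in A$ the additive polynomial $\phi_w(X)\in A[X]$ satisfies $\phi_w(0)=0$ (being $\F_q$-linear, it has no constant term), so $X$ divides $\phi_w(X)$ in $A[X]$; specializing $X$ to any $b\in A$ gives $b\mid\phi_w(b)$ in $A$. I will combine this with the two identities expressing that $\phi$ is an $\F_q$-algebra homomorphism: $\phi_{mn}(a)=\phi_m(\phi_n(a))$ and $\phi_{m+n}(a)=\phi_m(a)+\phi_n(a)$.

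Write $d=\gcdd{u}{v}$ and $u=du'$, $v=dv'$ with $\gcdd{u'}{v'}=1$. For the first divisibility, $\phi_u(a)=\phi_{u'}(\phi_d(a))$ is a multiple of $\phi_d(a)$ by the observation, and likewise $\phi_d(a)\mid\phi_v(a)$, whence $\phi_d(a)$ divides $\gcdd{\phi_u(a)}{\phi_v(a)}$. For the reverse divisibility, pick a Bézout relation $su'+tv'=1$ in $A$, so that $su+tv=d$, and use additivity to get $\phi_d(a)=\phi_s(\phi_u(a))+\phi_t(\phi_v(a))$. Since $\gcdd{\phi_u(a)}{\phi_v(a)}$ divides $\phi_u(a)$ it divides $\phi_s(\phi_u(a))$ (again by the observation), and similarly it divides $\phi_t(\phi_v(a))$, hence their sum $\phi_d(a)$.

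There is no real obstacle: the content is exactly these two identities together with Bézout in the PID $A$. The only points requiring a word of care are the gcd convention — the equality should be read up to a nonzero scalar, or one normalizes both sides to be monic — and the degenerate cases where some $\phi_u(a)$ vanishes, which the same argument handles since in a domain mutual divisibility forces the two elements to be associates (including when both are zero).
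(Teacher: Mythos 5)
Your proof is correct. The first divisibility is argued exactly as in the paper: $\phi_u=\phi_{u'}\circ\phi_d$ and $X\mid\phi_{u'}(X)$, so $\phi_d(a)\mid\phi_u(a)$. For the reverse divisibility the paper takes a slightly different (but ultimately equivalent) route: setting $m=\gcdd{\phi_u(a)}{\phi_v(a)}$, it observes that $u$ and $v$ both lie in the kernel ideal $\pi_a(\phi,mA)$, whose monic generator $\mathscr{P}_{\phi,m,a}$ therefore divides $u$ and $v$, hence divides $\gcdd{u}{v}$, giving $m\mid\phi_{\gcdd{u}{v}}(a)$. Your explicit Bézout computation $\phi_d(a)=\phi_s(\phi_u(a))+\phi_t(\phi_v(a))$ is precisely the reason that kernel is an ideal, so the two arguments have the same content; yours has the small advantages of being self-contained (it does not invoke Proposition 2.2, nor silently extend the notation $\mathscr{P}_{\phi,P,a}$ from primes $P$ to arbitrary moduli $m$, as the paper's proof does) and of explicitly addressing the normalization-up-to-units and vanishing edge cases, which the paper leaves implicit.
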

\begin{proof}
If $u=Qd$ for polynomials $d,Q\in A$, then $\phi_u(a)=\phi_{Qd}(a)$, which is divisible by $\phi_d(a)$. We can deduce that
$$\phi_{\gcdd{u}{v}}(a) \vert \gcdd{\phi_u(a)} {\phi_v(a)}.$$ Let us set $m=\gcdd{\phi_u(a)} {\phi_v(a)}$, satisfying in particular that $m$ divides $\phi_u(a)$ and $m$ divides $\phi_v(a)$. We therefore have 
$$\mathscr{P}_{\phi,m,a}\vert u \text{ and }  \mathscr{P}_{\phi,m,a}\vert v,$$
from which we obtain 
$$\mathscr{P}_{\phi,m,a}\vert \gcdd{u}{v},$$ in other words
$$m\vert \phi_{\gcdd{u}{v}}(a).$$
\end{proof}

It has been proved, for the Carlitz module \cite[Theorem 4.3.15]{bamunoba2017note}, that if $q>2$, then $P$ is $C$-Wieferich in base $1$ if and only if it is in base $\theta$. We therefore wonder when we can replace the base $a$ with other bases. We have the following result.
\begin{lem}
Let $a\in A-\phi(A)_{\operatorname{Tors}}$ and $P$ be an irreducible monic polynomial satisfying \ref{Hyp}. Let $d\in A$ be such that $\gcdd{d}{\mathscr{P}_{\phi,P,A}}=1$. Then $P$ is $\phi$-Wieferich in base $a$ if and only if $P$ is $\phi$-Wieferich in base $\phi_d(a)$ (and there are an infinite number of them).
\end{lem}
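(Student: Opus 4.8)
The plan is to translate everything into the language of the ideals $\pi_a(\phi, P^k)$ and exploit Lemma \ref{pgcddrinfeld} together with the tower property in Proposition \ref{prop:properties-of-pi}\ref{item:tour}. First I would observe that the hypothesis $\gcd(d, \mathscr{P}_{\phi,P,a}) = 1$ (I read the ``$A$'' subscript in the statement as a typo for $a$) is exactly what makes the base change by $\phi_d$ harmless: for $b \in A$ one has $\phi_b(\phi_d(a)) = \phi_{bd}(a)$, so $\pi_{\phi_d(a)}(\phi, I) = \{ b \in A \ens \phi_{bd}(a) \in I \}$, and since $d$ is coprime to the generator $\mathscr{P}_{\phi,P,a}$ of $\pi_a(\phi,P)$, multiplication by $d$ is invertible modulo $P$ in the relevant sense; more precisely I would show $\pi_{\phi_d(a)}(\phi,P) = \pi_a(\phi,P)$, i.e. $\mathscr{P}_{\phi,P,\phi_d(a)} = \mathscr{P}_{\phi,P,a}$. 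Indeed $b \in \pi_{\phi_d(a)}(\phi,P)$ iff $\mathscr{P}_{\phi,P,a} \mid bd$ iff (by coprimality) $\mathscr{P}_{\phi,P,a} \mid b$. In particular $\phi_d(a) \notin \phi(A)_{\operatorname{Tors}}$, since $a \notin \phi(A)_{\operatorname{Tors}}$, so point (5) of the proposition applies to both bases.

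Next I would run the same argument one level up, at $P^2$. Here I need that $d$ is still coprime to $P$ itself so that multiplication by $d$ remains a bijection on $A/P^2$-torsion considerations; since $\mathscr{P}_{\phi,P,a} \mid g_{P,\phi}$ and $g_{P,\phi} \equiv P - r_{P,\phi}$, coprimality with $\mathscr{P}_{\phi,P,a}$ does not a priori give coprimality with $P$, so I would instead argue directly: $\gcd(d, P) = 1$ unless $P \mid d$, and if $P \mid d$ then $\phi_d(a) \equiv 0 \pmod P$ would force $P \mid \mathscr{P}_{\phi,P,a}$ hence $\mathscr{P}_{\phi,P,a} = P$ (as $P$ is prime), contradicting $\gcd(d,\mathscr{P}_{\phi,P,a})=1$; so $\gcd(d,P)=1$ and hence $\gcd(d,P^2)=1$. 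Then $b \in \pi_{\phi_d(a)}(\phi,P^2)$ iff $\mathscr{P}_{\phi,P^2,a}\mid bd$. By Proposition \ref{prop:properties-of-pi}\ref{item:tour}, $\mathscr{P}_{\phi,P^2,a}$ equals either $\mathscr{P}_{\phi,P,a}$ or $P\cdot\mathscr{P}_{\phi,P,a}$; in either case it is coprime to $d$ (using $\gcd(d,P)=1$ and $\gcd(d,\mathscr{P}_{\phi,P,a})=1$), so $\mathscr{P}_{\phi,P^2,a}\mid bd$ iff $\mathscr{P}_{\phi,P^2,a}\mid b$, giving $\pi_{\phi_d(a)}(\phi,P^2) = \pi_a(\phi,P^2)$, i.e. $\mathscr{P}_{\phi,P^2,\phi_d(a)} = \mathscr{P}_{\phi,P^2,a}$.

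Now I would conclude: $P$ is $\phi$-Wieferich in base $a$ iff $\pi_a(\phi,P) = \pi_a(\phi,P^2)$ iff $\mathscr{P}_{\phi,P,a} = \mathscr{P}_{\phi,P^2,a}$ iff $\mathscr{P}_{\phi,P,\phi_d(a)} = \mathscr{P}_{\phi,P^2,\phi_d(a)}$ iff $P$ is $\phi$-Wieferich in base $\phi_d(a)$, where the middle equivalence uses the two identities established above. Alternatively, one can phrase the whole thing through point (6) of the proposition: $v_P(\phi_{g_{P,\phi}}(\phi_d(a))) = v_P(\gcd(\phi_{g_{P,\phi}d}(a), \text{something}))$ — but the cleanest route is the one via the annihilator ideals, since Lemma \ref{pgcddrinfeld} is precisely the tool that makes $\phi_d(a)$-statements reduce to $a$-statements after dividing out the coprime factor $d$. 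For the final parenthetical claim that there are infinitely many such $d$: the polynomials coprime to the fixed nonzero polynomial $\mathscr{P}_{\phi,P,a}$ are infinite in number (for instance all $d = \theta^n + c$ avoiding the finitely many roots of $\mathscr{P}_{\phi,P,a}$, or simply all irreducible polynomials of large degree), and distinct $d$'s giving $\phi_d(a)$ lying in distinct residue classes produce genuinely different bases.

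The main obstacle I anticipate is the bookkeeping around the hypothesis: the statement writes $\gcd(d,\mathscr{P}_{\phi,P,A})$ with a capital $A$, which is presumably $\mathscr{P}_{\phi,P,a}$, and one must be careful that coprimality with $\mathscr{P}_{\phi,P,a}$ is strong enough to control the situation at level $P^2$ as well — this is handled by the small lemma above showing $\gcd(d,P)=1$ automatically, after which Proposition \ref{prop:properties-of-pi}\ref{item:tour} does the rest. No deep input beyond Lemma \ref{pgcddrinfeld} and Proposition \ref{prop:properties-of-pi} is needed; the content is entirely that multiplication by a unit mod $P^2$ does not change the Wieferich condition, dressed up in Drinfeld-module notation.
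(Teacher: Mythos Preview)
Your level-$P$ computation --- showing $\pi_{\phi_d(a)}(\phi,P)=\pi_a(\phi,P)$ via $\phi_{dx}(a)\in PA \Leftrightarrow \mathscr{P}_{\phi,P,a}\mid dx \Leftrightarrow \mathscr{P}_{\phi,P,a}\mid x$ --- is exactly the paper's argument; in fact the paper's written proof stops right there, at $\mathscr{P}_{\phi,P,a}=\mathscr{P}_{\phi,P,\phi_d(a)}$. You are correct that this alone does not yet yield the Wieferich equivalence and that level $P^2$ must also be controlled; you go further than the paper in making this explicit.

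However, your argument that $\gcd(d,P)=1$ has a genuine gap. You assert that $P\mid d$ would force $\phi_d(a)\equiv 0\pmod P$; but $\phi_d(a)\equiv 0\pmod P$ is equivalent to $\mathscr{P}_{\phi,P,a}\mid d$, not to $P\mid d$, and in general $P\nmid \mathscr{P}_{\phi,P,a}$ (recall $\mathscr{P}_{\phi,P,a}\mid g_{P,\phi}=P-r_{P,\phi}$ with $r_{P,\phi}$ typically nonzero, so typically $\gcd(P,g_{P,\phi})=1$). The concern is not cosmetic: if $\gcd(P,\mathscr{P}_{\phi,P,a})=1$ and $P$ is \emph{not} Wieferich in base $a$, then $d=P$ satisfies the stated hypothesis $\gcd(d,\mathscr{P}_{\phi,P,a})=1$, yet your own level-$P^2$ computation gives
\[
\pi_{\phi_P(a)}(\phi,P^2)=\{x\in A:\ P\mathscr{P}_{\phi,P,a}\mid Px\}=\mathscr{P}_{\phi,P,a}A=\pi_{\phi_P(a)}(\phi,P),
\]
so $P$ \emph{is} Wieferich in base $\phi_P(a)$ --- the equivalence fails. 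Thus $\gcd(d,P)=1$ is genuinely needed and cannot be extracted from $\gcd(d,\mathscr{P}_{\phi,P,a})=1$ alone. Once that extra coprimality is granted, your level-$P^2$ argument via Proposition~\ref{prop:properties-of-pi}\ref{item:tour} is correct and finishes the proof cleanly.
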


\begin{proof} Let $a\notin \phi(A)_{\operatorname{Tors}}$. Let $d\in A$ be such that $\gcdd{d}{\mathscr{P}_{\phi,P,a}}=1$. Then, for all $x\in A$, we have
    $$\phi_{dx}(a)\in PA\Leftrightarrow\mathscr{P}_{\phi,P,a}\vert dx \Leftrightarrow \mathscr{P}_{\phi,P,a}\vert x.$$
    We therefore have for such $d$ that the element $b=\phi_d(a)$ (and these elements $\phi_d(a)$ are pairwise distinct since $a$ is not a torsion point)
    $$\pi_a(\phi,P)=\pi_b(\phi,P)$$
    so
    $$\mathscr{P}_{\phi,P,a}=\mathscr{P}_{\phi,P,b}.$$

\end{proof}

\begin{ex}
    We consider the Carlitz module $C$ again, and assume that $q>2$, so $C(A)_{\operatorname{Tors}}=\{0\}$. We consider $a=1$, and let $P$ be $C$-Wieferich in base $1$.

    We obtain that for any polynomial $d$ such that $(d,\pi_1(C,P))=1$, we have that $P$ is $C$-Wieferich in base $1$ if and only if $P$ is $C$-Wieferich prime in base $\phi_d(1)$. In particular, we have that $d=\theta-1$ is coprime with $\pi_1(d,P)$. Indeed, otherwise we would have $\pi_1(C,P)=\theta-1$, so $C_{\theta-1}(1)=0\pmod{P^2}$, but $C_{\theta-1}(1)=\theta$ is never divisible by $P^2$ for any $P$. Thus, we deduce that $P$ is $C$-Wieferich in base $1$ if and only if $P$ is $C$-Wieferich in base $C_{\theta-1}(1)=\theta$, which was proven in \cite[Theorem 4.6]{bamunoba2017note} using more computational methods.

\end{ex}

\subsection{The Fermat equation for Drinfeld modules}

Let $\phi:A\rightarrow A\{\tau\}$ be a Drinfeld $A$-module of rank $r$.

For any non-zero polynomial $N\in A-\{0\}$, we consider folliwing the homogeneous Fermat equation for $(\phi,N)$:
\begin{equation}\label{fermatN}
   y^{q^{r\deg N}}\phi_N\left(\dfrac{x}{y}\right)=z^{q^{r\deg N}}
\end{equation}
 that is a polynomial in $A[x,y,z]$.

In this section only, we slightly modify Definition \ref{def:wieferich} of $\phi$-Wieferich primes to extend Thakur's definition \cite{thakurwieferich}, also used in \cite{bamunoba2017note}.
\begin{defi}\label{def:wieferich 2}
We say that $P$ is $\phi$-Wieferich in base $a$ if $$\phi_{P}(a)= \phi_{r_{P,\phi}}(a)^{q^{r\deg P}}\pmod{P^2}.$$
\end{defi}
\begin{theoreme}\label{th:fermatdrinfeld} Consider $(x,y,z)\in A^3$ and let $P$ be a monic prime not dividing $xyz$, such that $(x,y,z)$ is a solution of the homogeneous Fermat equation for $(\phi,P)$. Then there exists $a\in A-PA$ such that $P$ is $\phi$-Wieferich in base $a$ (for Definition \ref{def:wieferich 2}).
\end{theoreme}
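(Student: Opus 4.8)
The plan is to take the base $a$ to be a lift modulo $P^2$ of the ratio $x/y$ appearing in the Fermat equation, and then propagate the Fermat relation through that congruence. Since $P\nmid y$, the element $y$ is invertible in $A/P^2A$; I would fix $w\in A$ with $wy\equiv 1\pmod{P^2}$ and let $a\in A$ be any lift of $xw\bmod P^2$. Because $P\nmid xw$ we get $a\notin PA$, as demanded. Writing $\phi_P=\sum_{i=0}^{r\deg P}c_i\tau^i$ with $c_i\in A$, the action is $\phi_P(u)=\sum_i c_iu^{q^i}$, and the homogeneous Fermat equation for $(\phi,P)$ becomes the identity $\sum_{i=0}^{r\deg P}c_i\,x^{q^i}y^{q^{r\deg P}-q^i}=z^{q^{r\deg P}}$ in $A$.

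First I would record that $ay\equiv x\pmod{P^2}$, hence $a^{q^i}y^{q^i}=(ay)^{q^i}\equiv x^{q^i}\pmod{P^2}$ for every $i$ (congruences mod $P^2$ survive raising to powers). Multiplying $y^{q^{r\deg P}}\phi_P(a)=\sum_i c_i\,a^{q^i}y^{q^{r\deg P}}$ term by term, writing $a^{q^i}y^{q^{r\deg P}}=(a^{q^i}y^{q^i})\,y^{q^{r\deg P}-q^i}$, and substituting, I obtain
\[
y^{q^{r\deg P}}\phi_P(a)\equiv\sum_{i=0}^{r\deg P}c_i\,x^{q^i}y^{q^{r\deg P}-q^i}=z^{q^{r\deg P}}\pmod{P^2},
\]
the last equality being the Fermat equation. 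Multiplying through by $w^{q^{r\deg P}}$ and using $(wy)^{q^{r\deg P}}\equiv 1\pmod{P^2}$ gives $\phi_P(a)\equiv(zw)^{q^{r\deg P}}\pmod{P^2}$.

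Next I would identify $zw$ modulo $P$. Reducing the last congruence mod $P$ and invoking the analogue of Fermat's little theorem, namely $\phi_{g_{P,\phi}}(a)\equiv 0\pmod P$, so $\phi_P(a)\equiv\phi_{r_{P,\phi}}(a)\pmod P$ since $\phi_P=\phi_{g_{P,\phi}}+\phi_{r_{P,\phi}}$, yields $(zw)^{q^{r\deg P}}\equiv\phi_{r_{P,\phi}}(a)\pmod P$; since $A/PA\cong\F_{q^{\deg P}}$ and $q^{r\deg P}=(q^{\deg P})^r$, the map $u\mapsto u^{q^{r\deg P}}$ is the identity on $A/PA$, so in fact $zw\equiv\phi_{r_{P,\phi}}(a)\pmod P$. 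Finally I would upgrade this from $P$ to $P^2$: write $zw=\phi_{r_{P,\phi}}(a)+Pt$ with $t\in A$; since $q^{r\deg P}$ is a power of $p$ and is $\geq 2$, additivity of the $q^{r\deg P}$-power map in characteristic $p$ gives $(zw)^{q^{r\deg P}}=\phi_{r_{P,\phi}}(a)^{q^{r\deg P}}+P^{q^{r\deg P}}t^{q^{r\deg P}}\equiv\phi_{r_{P,\phi}}(a)^{q^{r\deg P}}\pmod{P^2}$. Combining with the previous paragraph, $\phi_P(a)\equiv\phi_{r_{P,\phi}}(a)^{q^{r\deg P}}\pmod{P^2}$, which is exactly the condition of Definition \ref{def:wieferich 2}.

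I do not expect a serious obstacle: this is a \emph{weak} Wieferich/Fermat statement, and the content is essentially bookkeeping. The two points that must be gotten right are (i) choosing $a\equiv x/y\pmod{P^2}$, which is what turns the homogenized Fermat equation back into an assertion about $\phi_P(a)$, and (ii) the observation that in characteristic $p$ a congruence mod $P$ automatically becomes a congruence mod $P^2$ after raising to the $q^{r\deg P}$-th power. This last point is precisely why the exponent $q^{r\deg P}$ must appear in Thakur's modified Definition \ref{def:wieferich 2} rather than in the original Definition \ref{def:wieferich}.
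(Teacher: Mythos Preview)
Your proof is correct. The argument is tight: the choice $a\equiv x/y\pmod{P^2}$, the term-by-term reduction of $y^{q^{r\deg P}}\phi_P(a)$ to $z^{q^{r\deg P}}$ via the Frobenius-compatibility of congruences, and the final upgrade from a mod-$P$ identification of $zw$ to a mod-$P^2$ one by raising to the $q^{r\deg P}$-th power all go through exactly as you describe.

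The paper's proof, however, takes a noticeably less direct route. Following Bamunoba's Carlitz-module argument, it passes to the $P$-adic completion $A_P$, sets $x_1=x/z$, $y_1=y/z$, and introduces an auxiliary element $c$ defined by $\phi_{r_{P,\phi}}(x_1/y_1)=(x_1/y_1)c$; the base is then chosen as a lift of $1/(cy_1)$ modulo $P^2$. The computation splits $\phi_P=\phi_{g_{P,\phi}}+\phi_{r_{P,\phi}}$ at the outset and tracks both pieces separately through a perturbation $x_1c=1+dP$, with the $dP$-terms eventually cancelling. Your route bypasses all of this: you keep $\phi_P$ intact until the very end, never need $A_P$ or the auxiliary $c$, and the only nontrivial inputs are Fermat's little theorem and the fact that the $q^{r\deg P}$-th power map is the identity on $A/PA$. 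The paper's base and yours agree modulo $P$ (since $x_1c\equiv 1\pmod P$ forces $1/(cy_1)\equiv x/y\pmod P$) but may differ modulo $P^2$; either choice yields a valid Wieferich base, and yours is the more economical one.
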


We draw inspiration from the proof of \cite[Theorem 3.2]{bamunoba2017note}. 

\begin{proof}Note first that if we consider $K_P$ the $P$-adic completion of $K$, and that if we denote by $A_P=\{a\in K_P\ens v_P(a)\geq 0\}$ the $P$-adic completion of $A$, then we have
$$\phi_{g_{P,\phi}}(a)\equiv 0\pmod{P}, \forall a\in A_P.$$

Let $(x,y,z)$ be a solution of the homogeneous Fermat equation for $(\phi,P)$, such that $P\nmid xyz$. We also consider $x_1=\dfrac{x}{z}$ and $y_1=\dfrac{y}{z}$, which are elements of $A_{P}$. 
   The homogeneous Fermat equation for $P$ can be rewritten as follows:
    $$y_1^{q^{r\deg P}}\phi_P\left(\dfrac{x_1}{y_1}\right)-1=0,$$ thus,
    \begin{equation}\label{eq:fermat2}
        y_1^{q^{r\deg P}}\phi_{g_{P,\phi}}\left(\dfrac{x_1}{y_1}\right)+y_1^{q^{r\deg P}}\phi_{r_{P,\phi}}\left(\dfrac{x_1}{y_1}\right)-1=0.
    \end{equation}
    By reducing modulo $P$, we obtain
    \begin{equation}\label{eq:fermat3}y_1\phi_{r_{P,\phi}}\left(\dfrac{x_1}{y_1}\right)=1\pmod{P}.\end{equation}
  We denote by
    $$\left\{\begin{aligned}& \phi_{r_{P,\phi}}\left(\dfrac{x_1}{y_1}\right)=\dfrac{x_1}{y_1}c, c\in K^\ast, v_P(c)=0, \\
    &x_1c=1+dP, d\in K^\ast, v_P(d)=0.\end{aligned}\right.$$
We want to reduce modulo $P^2$. First, remark that
    $$\phi_{g_{P,\phi}}\left(\dfrac{x_1}{y_1}\right)=\phi_{g_{P,\phi}}\left(\dfrac{1+dP}{y_1c}\right)=\phi_{g_{P,\phi}}\left(\dfrac{1}{y_1c}\right)-r_{P,\phi}\dfrac{d P}{y_1c}\pmod{P^2}$$ and
    $$\phi_{r_{P,\phi}}\left(\dfrac{x_1}{y_1}\right)=\phi_{r_{P,\phi}}\left(\dfrac{1+dP}{y_1c}\right)=\phi_{r_{P,\phi}}\left(\dfrac{1}{y_1c}\right)+r_{P,\phi}\dfrac{dP}{y_1c}\pmod{P^2}.$$
    By reducing Equation \eqref{eq:fermat2} modulo $P^2$, we obtain
$$y_1^{q^{r\deg P}}\phi_{g_{P,\phi}}\left(\dfrac{1}{y_1c}\right)-y_1^{q^{r \deg P}}r_{P,\phi}\dfrac{d P}{y_1c}+y_1^{q^{ r\deg P}}\phi_{r_{P,\phi}}\left(\dfrac{1}{y_1c}\right)+y_1^{q^{r\deg P}} r_{P,\phi}\dfrac{d P}{y_1c}-1=0\pmod{P^2}$$ so
$$y_1^{q^{r\deg P}}\phi_{g_{P,\phi}}\left(\dfrac{1}{y_1c}\right)+y_1^{q^{ r\deg P}}\phi_{r_{P,\phi}}\left(\dfrac{1}{y_1c}\right)-1=0\pmod{P^2}$$
thus,
$$(y_1c)^{q^{r \deg P}}\phi_{g_{P,\phi}}\left(\dfrac{1}{y_1c}\right)= c^{q^{r\deg P}} -(cy_1)^{q^{r\deg P}}\phi_{r_{P,\phi}}\left(\dfrac{1}{y_1c}\right)\pmod{P^2}$$ then
$$\phi_{g_{P,\phi}}\left(\dfrac{1}{y_1c}\right)=\left(\dfrac{1}{y_1}\right)^{q^{r\deg P}}-\phi_{r_{P,\phi}}\left(\dfrac{1}{y_1c}\right)\pmod{P^2}.$$
By Equation \eqref{eq:fermat3}, we have
$$\begin{aligned}\left(\dfrac{1}{y_1}\right)^{q^{r\deg P}}&= \phi_{r_{P,\phi}}\left(\dfrac{x_1}{y_1}\right)^{q^{r\deg P}} \pmod{P^2}\\
&= \phi_{r_{P,\phi}}\left(\dfrac{x_1c}{y_1c}\right)^{q^{r\deg P}}\pmod{P^2}\\
&= \phi_{r_{P,\phi}}\left(\dfrac{1+dP}{y_1c}\right)^{q^{r\deg P}}\pmod{P^2}\\
&= \phi_{r_{P,\phi}}\left(\dfrac{1}{y_1c}\right)^{q^{r\deg P}} \pmod{P^2}.\end{aligned}$$

By choosing $a\in A-PA$ such that $a=\dfrac{1}{cy_1}\pmod{P^2}$, we have 
$$\phi_{g_{P,\phi}}(a)=\phi_{r_{P,\phi}}(a)^{q^{r\deg P}}-\phi_{r_{P,\phi}(a)}\pmod{P^2}$$ so
$$\phi_P(a)= \phi_{r_{P,\phi}}(a)^{q^{r\deg P}}\pmod{P^2}.$$
\end{proof}

Note that in the classical case, A. Wieferich \cite{wieferich1909letzten} proved that if the first case of Fermat's equation for a prime number $p$ had a solution, then $p$ is a Wieferich prime in base $a=2$. The previous result is therefore a very weak version of A. Wieferich's theorem, since it does not specify any base.

\subsection{ The number of Wieferich primes}
Let $a\in A-\phi(A)_{\operatorname{Tors}}$. In this Subsection, we look at the finitude or infinitude of $\phi$-Wieferich primes in base $a$. 

Note that in the classical case, Silvermann \cite{silverman1988wieferich} has shown that, under the ABC-conjecture, there are infinitely many prime numbers that are not Wieferich numbers. In the context of function fields, we have such a conjecture, which in this context is a theorem, stated as follows.

For $a\in A$, let $\operatorname{rad}(a)=\prod\limits_{P\vert a} P$ be the product of the distinct primes dividing $a$.

\begin{theoreme}\label{th:abc}[Mason–Stothers] Let $a,b,c \in A$ be pairwise coprime to each other and such that the derivative of at least one of the three polynomials is non-zero. Then we have the inequality:

$$\max \{ \deg a, \deg b, \deg c\} \leq \deg(\operatorname{rad}(abc))-1.$$
\end{theoreme}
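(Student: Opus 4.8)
The plan is to prove this polynomial $ABC$ inequality by the classical Wronskian argument of Stothers and Mason, which carries over to $\F_q[\theta]$ once inseparability is handled. As is inherent to the $ABC$ statement, we work under the relation $a+b+c=0$ (equivalently $a+b=c$ up to sign). Pairwise coprimality then forces $a,b,c$ all non-zero, and since $a'+b'+c'=0$ the derivative hypothesis means that in fact at least two of $a',b',c'$ are non-zero.

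First I would introduce the common Wronskian. Differentiating $a+b+c=0$ and eliminating one variable, a short computation gives
$$\Delta := ab'-a'b = bc'-b'c = ca'-c'a,$$
so $\Delta$ is simultaneously the Wronskian of each of the three pairs. The next point is that $\Delta\neq 0$. If $ab'=a'b$ with $a,b\neq 0$, then $(a/b)'=0$, so $a/b$ lies in $\ker(d/d\theta)$ on $K$, which over the perfect field $\F_q$ equals $\F_q(\theta^p)=K^p$; writing $a/b=(u/v)^p$ in lowest terms and comparing with $\gcdd{a}{b}=1$ forces $a=\alpha u^p$ and $b=\beta v^p$ with $\alpha,\beta\in\F_q^\times$, hence $a'=b'=0$, and then $c'=0$, contradicting the hypothesis (in characteristic $0$ the same computation makes $a,b,c$ all constant, the same contradiction).

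Then I would compare $\Delta$ with the radicals. For every monic prime $P$ with $m=v_P(a)\geq 1$ one has $v_P(a')\geq m-1$, whether or not $p\mid m$; hence $v_P\bigl(\gcdd{a}{a'}\bigr)\geq m-1$, so $a/\gcdd{a}{a'}$ is squarefree, giving $\deg\gcdd{a}{a'}\geq\deg a-\deg\operatorname{rad}(a)$, and likewise for $b$ and $c$. On the other hand $\gcdd{a}{a'}$ divides both $ab'$ and $a'b$, hence divides $\Delta$; using the two alternative expressions for $\Delta$, the same holds for $\gcdd{b}{b'}$ and $\gcdd{c}{c'}$. As $a,b,c$ are pairwise coprime, these three gcd's are pairwise coprime, so their product divides $\Delta$; since $\Delta\neq 0$ this yields
$$\deg\gcdd{a}{a'}+\deg\gcdd{b}{b'}+\deg\gcdd{c}{c'}\ \leq\ \deg\Delta.$$
To finish, assume without loss of generality that $\deg a=\max\{\deg a,\deg b,\deg c\}$ and bound $\deg\Delta\leq\deg b+\deg c-1$ via the form $\Delta=bc'-b'c$. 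Inserting the radical lower bounds, using $\operatorname{rad}(abc)=\operatorname{rad}(a)\operatorname{rad}(b)\operatorname{rad}(c)$ (pairwise coprimality), and cancelling $\deg b+\deg c$, one gets $\deg a\leq\deg\operatorname{rad}(abc)-1$, which is the claim.

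The only genuinely delicate step is the non-vanishing of $\Delta$: in characteristic $p$ a vanishing Wronskian no longer forces $a/b$ to be constant, and one must invoke that $\F_q$ is perfect, together with coprimality, to conclude that $a,b,c$ would all have to be constant multiples of $p$-th powers — which is exactly what the hypothesis ``at least one derivative is non-zero'' excludes. Everything else is bookkeeping with $P$-adic valuations.
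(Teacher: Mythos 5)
The paper does not prove this statement: it is quoted as the classical Mason--Stothers theorem (the function-field ABC theorem) and used as a black box, so there is no in-paper argument to compare against. Your Wronskian proof is the standard one and is correct and complete: the identity $\Delta=ab'-a'b=bc'-b'c=ca'-c'a$, the non-vanishing of $\Delta$ via the perfectness of $\F_q$ (if $ab'=a'b$ then $a,b$ are constants times $p$-th powers by coprimality, forcing $a'=b'=c'=0$), the valuation bound $v_P(\gcdd{a}{a'})\geq v_P(a)-1$, and the divisibility of $\Delta$ by the pairwise coprime product $\gcdd{a}{a'}\gcdd{b}{b'}\gcdd{c}{c'}$ all check out, and the final bookkeeping gives exactly the claimed inequality. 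One point worth flagging: the statement as printed in the paper omits the essential hypothesis $a+b+c=0$ (equivalently $a+b=c$), without which the inequality is false (take $a,b,c$ unrelated); you correctly reinstate it at the outset, and the paper does in fact apply the theorem to a triple satisfying such a relation, so this is an omission in the statement rather than in your argument.
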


Unfortunately, this theorem does not seem to be sufficient to prove the non-infinity of $\phi$-Wieferich primes in base $a$. We prove it under the following conjecture.

\begin{thmB}\label{conj:abcsanscarré}
   There exists infinitely many $b\in a$ such that $\dfrac{\phi_b(a)}{a}$ has no square factors and has a non-zero derivative (at $\theta$).
\end{thmB}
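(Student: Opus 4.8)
The plan is to reduce the statement to a pure squarefreeness assertion, then to attack that by a squarefree sieve, and to pin down where the argument stalls.

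\emph{Reduction.} Writing $\phi_b=\sum_{i\ge 0}c_i\tau^i$ with constant term $c_0=b$, one has $\phi_b(a)=ba+\sum_{i\ge 1}c_i a^{q^i}$, and $a\mid a^{q^i}$ for $i\ge 1$; hence $f_b:=\phi_b(a)/a\in A$, with $\deg f_b=\deg\phi_b(a)-\deg a$, a quantity tending to $+\infty$ as $\deg b\to\infty$. Moreover the condition on the derivative is automatic once $f_b$ is squarefree of positive degree: if $f_b'=0$ then $f_b\in\F_q[\theta^p]$, so $f_b$ is a $p$-th power (as $\F_q$ is perfect), and a $p$-th power of positive degree has a repeated irreducible factor, contradicting squarefreeness. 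So it suffices to produce infinitely many $b\in A$ with $\phi_b(a)/a$ squarefree.

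\emph{The sieve.} Fix a large $N$ and work in the set $B_N$ of monic $b$ of degree $N$, so $|B_N|=q^N$. For a monic prime $P\nmid a$, Definition~\ref{def:pi} gives $P^2\mid\phi_b(a)\iff b\in\pi_a(\phi;P^2)$, which by point~\ref{point1} of Proposition~\ref{prop:properties-of-pi} is a nonzero ideal of $A$; write $\nu(P)$ for the degree of its monic generator. Counting monic multiples of that generator in $B_N$ yields exactly $q^{N-\nu(P)}$ bad $b$ (and none when $\nu(P)>N$); the finitely many primes dividing $a$, and the finitely many degree-$1$ primes when $q=2$, are dealt with by a crude direct bound. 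By point~\ref{item:tour} of Proposition~\ref{prop:properties-of-pi}, $\nu(P)=\deg\mathscr{P}_{\phi,P,a}+\deg P$ if $P$ is not $\phi$-Wieferich in base $a$, and $\nu(P)=\deg\mathscr{P}_{\phi,P,a}$ if it is. One then wants
$$\sum_{P\ \text{monic prime}}q^{-\nu(P)}<1,$$
(after, if needed, a preliminary restriction discarding the few smallest primes), for then the number of $b\in B_N$ with $f_b$ not squarefree is $<|B_N|$, and $N\to\infty$ gives infinitely many squarefree values. Since $P^2\mid f_b$ forces $2\deg P\le\deg f_b$, only primes of bounded degree intervene for each $N$; and the contribution of the non-$\phi$-Wieferich primes is harmless as soon as one has a lower bound $\deg\mathscr{P}_{\phi,P,a}\ge c\deg P$ with $c>0$ fixed, since then $\nu(P)\ge(1+c)\deg P$ and, there being $\sim q^d/d$ primes of degree $d$, the corresponding partial sum converges with arbitrarily small tail.

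\emph{The obstacle.} The step that stalls is the contribution of the $\phi$-Wieferich primes, and more generally of primes $P$ whose Drinfeld annihilator $\mathscr{P}_{\phi,P,a}$ is abnormally small. For a $\phi$-Wieferich prime one has $\nu(P)=\deg\mathscr{P}_{\phi,P,a}$, i.e.\ \emph{every} $b$ that is a multiple of $\mathscr{P}_{\phi,P,a}$ already satisfies $P^2\mid\phi_b(a)$; the bad set for such a $P$ thus has density $q^{-\deg\mathscr{P}_{\phi,P,a}}$ in $B_N$, and controlling its union over all $\phi$-Wieferich primes would require a quantitative sparsity statement of the shape $\sum_{P\ \phi\text{-Wieferich}}q^{-\deg\mathscr{P}_{\phi,P,a}}<\tfrac{1}{2}$, which is entirely out of reach — it is not even known whether this set is finite or infinite, one of the questions this paper raises. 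The small-annihilator primes are, by Lemma~\ref{pgcddrinfeld} and the characterisation of $\mathscr{P}_{\phi,P,a}$, exactly the prime divisors of the values $\phi_m(a)$ with $\deg m$ bounded, and estimating their combined effect meets the same wall; Theorem~\ref{th:abc} gives no traction here, there being no useful coprime additive relation among $\phi_b(a)$, $a$, and a third polynomial. In short, Conjecture~\ref{conj:abcsanscarré} would follow either from a uniform lower bound $\deg\mathscr{P}_{\phi,P,a}\gg\deg P$ together with enough sparsity of $\phi$-Wieferich primes, or from a function-field analogue of the (still open) classical assertion that $2^n-1$ is squarefree for infinitely many $n$. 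Unconditionally, the sieve above already yields the weaker conclusion that, for every $D$, there are infinitely many $b$ with $\phi_b(a)/a$ free of square factors of degree $>D$; promoting this to genuine squarefreeness is the hard part.
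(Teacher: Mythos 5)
This statement is stated in the paper as a \emph{conjecture} (Conjecture~\ref{conj:abcsanscarré}); the paper offers no proof of it, and only uses it as a hypothesis. Your proposal, to its credit, does not claim to prove it either: you set up a squarefree sieve and then explicitly identify where it stalls. So the honest verdict is that there is a genuine gap --- but it is the gap you yourself name, and it is the reason the paper leaves the statement as a conjecture. Your preliminary reduction is correct and worth keeping: $\phi_b(a)/a\in A$ because $a\mid a^{q^i}$, and the non-vanishing of the derivative is indeed automatic once $\phi_b(a)/a$ is squarefree of positive degree, since a polynomial with zero derivative over a perfect field is a $p$-th power. The translation of ``$P^2\mid\phi_b(a)$'' into membership in the ideal $\pi_a(\phi;P^2)$, and the dichotomy $\nu(P)=\deg\mathscr{P}_{\phi,P,a}+\deg P$ (non-Wieferich) versus $\nu(P)=\deg\mathscr{P}_{\phi,P,a}$ (Wieferich), are also correct and clarify exactly why Wieferich primes are the enemy.

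Two caveats on the parts you present as unproblematic. First, the contribution of the \emph{non}-Wieferich primes is not actually ``harmless'': you need $\sum_P q^{-\deg P-\deg\mathscr{P}_{\phi,P,a}}<\infty$, and the only general lower bound available is $\deg\mathscr{P}_{\phi,P,a}\gg\log_q\deg P$ (since $P$ must divide the nonzero polynomial $\phi_{\mathscr{P}_{\phi,P,a}}(a)$, whose degree is roughly $q^{r\deg\mathscr{P}_{\phi,P,a}}\deg a$); grouping primes by the value $m=\mathscr{P}_{\phi,P,a}$, there can be up to about $q^{r\deg m}\deg a$ primes with annihilator $m$, so the sum is bounded only by a divergent series $\sum_m q^{(r-1)\deg m}$. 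The linear lower bound $\deg\mathscr{P}_{\phi,P,a}\ge c\deg P$ you invoke is itself a strong open hypothesis (a Drinfeld analogue of Artin-type index questions), not something the paper provides. Second, and for the same reason, the ``weaker unconditional conclusion'' you claim at the end (infinitely many $b$ with $\phi_b(a)/a$ free of square factors of degree $>D$) does not follow from the sieve as written: restricting to $\deg P>D$ does not make the relevant sum converge without that same unproven lower bound. Finally, a typographical point: the statement should read $b\in A$, not $b\in a$.
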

\begin{theoreme}
   Assume Conjecture \ref{conj:abcsanscarré} to be true. Then there exists an infinite number of non-$\phi$-Wieferich primes in base $a$, for any $a\in A-\phi(A)_{\operatorname{Tors}}$.
\end{theoreme}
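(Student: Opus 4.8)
The plan is to derive the statement quickly from Conjecture~\ref{conj:abcsanscarré} together with the characterisation of Wieferich primes in terms of the ideals $\pi_a(\phi,-)$. The crucial point is the following valuation criterion: \emph{if $P$ is a monic prime with $P\nmid a$ satisfying \ref{Hyp}, and if there is some $b\in A$ with $v_P(\phi_b(a))=1$, then $P$ is not $\phi$-Wieferich in base $a$.} Indeed, $v_P(\phi_b(a))=1$ forces $P\mid\phi_b(a)$, that is, $b\in\pi_a(\phi,P)$; if $P$ were $\phi$-Wieferich in base $a$ we would have $\pi_a(\phi,P)=\pi_a(\phi,P^2)$ (the equivalence recorded just after the definition of $\mathscr{P}_{\phi,P,a}$), hence $b\in\pi_a(\phi,P^2)$, i.e. $\phi_b(a)\equiv 0\pmod{P^2}$, contradicting $v_P(\phi_b(a))=1$. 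So it suffices to produce infinitely many primes $P\nmid a$ (satisfying \ref{Hyp}) with $v_P(\phi_b(a))=1$ for some $b$.

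Fix $a\in A-\phi(A)_{\operatorname{Tors}}$ and let $\mathcal B\subseteq A$ be the infinite set given by Conjecture~\ref{conj:abcsanscarré}. For $b\in\mathcal B$ the polynomial $\phi_b(a)/a$ is well defined (each $\phi_b$ is additive, so $a\mid\phi_b(a)$ in $A$), is squarefree, and has non-zero derivative, hence is non-constant. First, $b\mapsto\deg\phi_b(a)$ is unbounded on $\mathcal B$: otherwise the values $\phi_b(a)$ would lie in a fixed finite subset of $A$, so $\phi_{b_1}(a)=\phi_{b_2}(a)$ for some distinct $b_1,b_2\in\mathcal B$, giving $\phi_{b_1-b_2}(a)=0$ with $b_1-b_2\neq 0$ and contradicting $a\notin\phi(A)_{\operatorname{Tors}}$; hence $\deg(\phi_b(a)/a)=\deg\phi_b(a)-\deg a$ is unbounded on $\mathcal B$ too. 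Second, only finitely many primes $Q$ divide the fixed polynomial $a$, and for every prime $P$ with $P\mid\phi_b(a)/a$ and $P\nmid a$ one has, by squarefreeness, $v_P(\phi_b(a))=v_P(a)+v_P(\phi_b(a)/a)=1$, so by the criterion above $P$ is not $\phi$-Wieferich in base $a$, provided $P$ satisfies \ref{Hyp} (which rules out at most the two primes of degree $1$, and only when $q=2$).

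It remains to check that the set
\[
\mathcal N=\{\,P\text{ monic prime}\ens P\nmid a\ \text{and}\ P\mid\phi_b(a)/a\text{ for some }b\in\mathcal B\,\}
\]
is infinite. If it were finite, then every squarefree polynomial $\phi_b(a)/a$ with $b\in\mathcal B$ would divide the fixed product $\bigl(\prod_{P\in\mathcal N}P\bigr)\bigl(\prod_{Q\mid a}Q\bigr)$ up to a constant, so $\deg(\phi_b(a)/a)$ would be bounded on $\mathcal B$, contradicting the previous paragraph. Discarding from $\mathcal N$ the at most two primes that violate \ref{Hyp} still leaves infinitely many primes that are not $\phi$-Wieferich in base $a$. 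Since the conjecture is the only nontrivial input, there is no genuine obstacle beyond bookkeeping: one must make sure that removing the finitely many primes dividing $a$ and those failing \ref{Hyp} is harmless, and that squarefreeness is used precisely to turn $P\mid\phi_b(a)/a$ into $v_P(\phi_b(a))=1$ (the non-zero derivative hypothesis only serving to exclude the degenerate case where $\phi_b(a)/a$ is constant). Note that in this line of argument the Mason--Stothers inequality (Theorem~\ref{th:abc}) does not enter directly; its natural role would be an attempt to establish Conjecture~\ref{conj:abcsanscarré} itself by controlling how squarefull $\phi_b(a)/a$ can be, which, as already observed, does not seem to suffice.
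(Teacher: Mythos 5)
Your proof is correct, but it takes a genuinely different and in fact shorter route than the paper. The paper argues by contradiction in the style of Silverman: it splits $\phi_b(a)/a$ into its squarefree part $u_{a,b}$ and its squarefull part $v_{a,b}$, notes that the primes dividing $u_{a,b}$ are non-Wieferich, deduces from the assumed finiteness of non-Wieferich primes that $\deg u_{a,b}$ is bounded, and then invokes Mason--Stothers applied to the identity $\phi_b(a)/a=\phi_{b-1}(a)/a+1$ (with $b-1$ in the conjectural set, so that $v_{a,b-1}=1$) to bound $\deg v_{a,b}$ as well, contradicting the unboundedness of the degrees. You observe that once Conjecture~\ref{conj:abcsanscarré} is granted, the ABC input is redundant: for $b$ in the conjectural set $\mathcal B$ the polynomial $\phi_b(a)/a$ is already squarefree, so every prime factor $P$ of it with $P\nmid a$ satisfies $v_P(\phi_b(a))=1$, hence $\pi_a(\phi,P)\neq\pi_a(\phi,P^2)$ and $P$ is not $\phi$-Wieferich in base $a$; since $b\mapsto\phi_b(a)$ is injective (as $a\notin\phi(A)_{\operatorname{Tors}}$), the degrees $\deg(\phi_b(a)/a)$ are unbounded on $\mathcal B$, so these squarefree polynomials cannot all be supported on a fixed finite set of primes, which produces infinitely many such $P$. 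Your version is also slightly more careful than the paper on two points it glosses over: you discard the finitely many primes dividing $a$ (needed to pass from $P^2\nmid\phi_b(a)/a$ to $P^2\nmid\phi_b(a)$) and the finitely many primes violating \ref{Hyp} (needed to invoke the equivalence between being $\phi$-Wieferich and $\pi_a(\phi,P)=\pi_a(\phi,P^2)$). What the paper's longer argument buys is robustness: the Mason--Stothers step is exactly what one would need if the conjecture were weakened so as to allow a nontrivial squarefull part of $\phi_b(a)/a$, whereas your argument uses squarefreeness in an essential way.
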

\begin{proof} Let us assume Conjecture \ref{conj:abcsanscarré} to be true.
For all $b\in A-\{0\}$, we denote by 
$$\dfrac{\phi_b(a)}{a}=u_{a,b}v_{a,b}$$ with 
$$u_{a,b}=\prod\limits_{P\vert \frac{\phi_b(a)}{a}, P^2\nmid \frac{\phi_b(a)}{a}} P, \text{ and } v_{b,a}=\prod\limits_{P^2\vert \frac{\phi_b(a)}{a}} P^{v_P\left(\dfrac{\phi_b(a)}{a}\right)}.$$
 In particular, we have
$$\deg\left(\operatorname{rad}(v_{a,b})\right)\leq \dfrac{\deg(v_{a,b})}{2}.$$
Any prime $P$ dividing $u_{a,b}$ satisfies $P^2$ does not divide $\phi_b(a)$, in other words, for these $P$, we have $\pi_a(\phi,P)\neq \pi_a(\phi,P^2)$, so they are not $\phi$-Wieferich in base $a$.

Consider
$$S=\left\{ b\in A\ens \dfrac{\phi_b(a)}{a} \text{ is square-free and has non-zero derivative} \right\},$$ which is an infinite set according to Conjecture \ref{conj:abcsanscarré}. Consider also
$$W=\left\{P \text{ prime in }A \ens P \text{ is not } \phi\text{-Wieferich in base } a\right\}.$$
Suppose, by contradiction, that $W$ is a finite set. 

In particular, the set $\{u_{a,b}, b\in A\}$ is a finite set, so the set $\{\deg u_{b,a}, b\in A\}$ is bounded by a constant $C\geq 0$.

Let us first show that the set 
$$T=\left\{ v_{a,b}, b\in A \ens b-1\in S\right\}$$ is infinite. If not, then we would have that the set $\{P \text{ prime} \ens P\vert v_{b,a}, b-1\in S\}$ is finite, in other words the set $L=\{P \text{ prime}\ens P^2\vert \phi_b(a), b-1\in S\}$ is finite. Since the set $S$ is infinite, the set $\{\phi_b(a), b-1\in S\}$ is infinite, so the set 
$$\{P \text{ prime} \ens P\vert \phi_b(a), b-1\in S\}$$ is infinite, and since the set $L$ is finite, we would have that the set
$$\{P \text{ prime }\ens P\parallel \phi_b(a), a-1\in S\}$$ is infinite, i.e. the set
$$\{u_{b,a}, a-1\in S\}$$ is infinite, which contradicts the finiteness of $W$. Therefore, the set $T$ is infinite.

 We now apply Mason–Stothers' theorem to $\left(\dfrac{\phi_b(a)}{a},1, \dfrac{\phi_{b-1}(a)}{a}\right)$ where $b-1$ runs through the set $S$ which is infinite (and we have $v_{a,b-1}=1$ in this case). We have
$$\begin{aligned}\deg (v_{a,b})&\leq \deg \dfrac{\phi_a(b)}{a}\\
&\leq \max\left( \deg \dfrac{\phi_b(a)}{a}, 1, \deg \dfrac{\phi_{b-1}(a)}{a}\right),\\
&\leq \deg(\operatorname{rad} v_{a,b} v_{a,b-1} u_{a,b} u_{a,b-1})-1 \text{ by Theorem \ref{th:abc}},\\
&\leq \deg(\operatorname{rad} v_{a,b})+\deg(\operatorname{rad} v_{a,b-1})+\deg(\operatorname{rad} u_{a,b})+\deg(\operatorname{rad} u_{a,b-1})-1\\
&\leq \dfrac{\deg(v_{a,b})}{2}+0+C+C-1,\end{aligned}$$

thus, the sequence $(\deg(v_{a,b}))_{b-1\in S}$ is bounded, which contradicts the non-finiteness of the set T.

\end{proof}
\begin{rem}
    It has been shown \cite[Lemma 3.5]{angles2012arithmetic} that for $q>2$, $\phi=C$ the Carlitz module and $a=1$, there exist infinitely many primes that are not $C$-Wieferich in base $a$, without using these conjectures. The proof od this result does not apply at all to the case of general Drinfeld modules.
\end{rem}

As a partial converse of the previous result, we have the following result.

\begin{prop} Assume that there exists a finite number of polynomials $b\in A$ such that $\dfrac{\phi_b(a)}{a}$ is square free. Then there exists an infinite number of primes that are $\phi$-Wieferich in base $a$.
\end{prop}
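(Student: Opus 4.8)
The plan is to convert the abundance of square factors among the polynomials $\phi_Q(a)/a$, as $Q$ runs over the monic primes of $A$, into an abundance of primes that are $\phi$-Wieferich in base $a$. Recall from the discussion after Definition~\ref{def:wieferich} that for a prime $P$ satisfying \ref{Hyp} one has: $P$ is $\phi$-Wieferich in base $a$ if and only if $\pi_a(\phi,P)=\pi_a(\phi,P^2)$. Note also that $a\notin\phi(A)_{\operatorname{Tors}}$ (which is in force throughout this subsection) forces $\phi_Q(a)\neq 0$ for every prime $Q$, and that $a\mid\phi_Q(a)$ in $A$: indeed the constant term (the coefficient of $\tau^0$) of $\phi_Q$ equals $Q$, so $\phi_Q(a)=a\bigl(Q+\sum_{i\geq 1}c_i\,a^{q^i-1}\bigr)$ for suitable $c_i\in A$. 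By hypothesis the set of $Q$ for which $\phi_Q(a)/a$ is square free is finite, so for all but finitely many primes $Q$ we may fix a prime $P$ with $P^2\mid\phi_Q(a)/a$.

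First I would discard a harmless finite set of primes $Q$ so as to be able to choose $P$ outside a ``bad'' set $\mathcal B$, namely the finite set of primes dividing $a$ together with, when $q=2$, the two primes of degree $1$. The claim is that for each fixed $P\in\mathcal B$ there are only finitely many primes $Q$ with $P^2\mid\phi_Q(a)/a$. If $P^2\mid a$, writing $a=P^v a'$ with $v\geq 2$ and $P\nmid a'$, each term $c_i a^{q^i-1}$ for $i\geq1$ is divisible by $P^{v(q^i-1)}$ hence by $P^2$, so $\phi_Q(a)/a\equiv Q\pmod{P^2}$ and $P^2\mid\phi_Q(a)/a$ would force $P^2\mid Q$, impossible. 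Otherwise $v_P(a)\leq 1$, and then $P^2\mid\phi_Q(a)/a$ gives $P^2\mid\phi_Q(a)$, i.e.\ $Q\in\pi_a(\phi,P^2)$; since $a\notin P^2A$ this is a proper nonzero ideal, whose monic generator $N$ is nonconstant and divides the prime $Q$, hence equals $Q$, so $Q$ is the (at most one) such prime. Summing over $P\in\mathcal B$, the set of primes $Q$ all of whose square factors lie in $\mathcal B$ is finite. After removing it, for the remaining (still infinitely many) primes $Q$ I can choose a prime $P=P_Q$ with $P^2\mid\phi_Q(a)/a$, $P\nmid a$, and $\deg P>1$ when $q=2$; in particular such $P$ satisfies \ref{Hyp}.

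It then remains to check that each such $P$ is $\phi$-Wieferich in base $a$ and that the construction yields infinitely many distinct such $P$. Since $P\nmid a$ and $a\mid\phi_Q(a)$, we get $P^2\mid\phi_Q(a)$, so $Q\in\pi_a(\phi,P^2)\subseteq\pi_a(\phi,P)$. As $a\notin PA$, the ideal $\pi_a(\phi,P)=\mathscr{P}_{\phi,P,a}A$ is proper, so $m:=\mathscr{P}_{\phi,P,a}$ is monic and nonconstant; since $m\mid Q$ with $Q$ prime, $m=Q$. By point~\ref{item:tour} of Proposition~\ref{prop:properties-of-pi}, $\pi_a(\phi,P^2)$ equals either $mA$ or $PmA$, but its generator divides $Q=m$ while $Pm\nmid m$, so $\pi_a(\phi,P^2)=mA=\pi_a(\phi,P)$: thus $P$ is $\phi$-Wieferich in base $a$, and moreover $\mathscr{P}_{\phi,P,a}=Q$. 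In particular the assignment $Q\mapsto P_Q$ is injective, since one recovers $Q$ as $\mathscr{P}_{\phi,P_Q,a}$; letting $Q$ range over the infinitely many primes not excluded above therefore produces infinitely many distinct primes that are $\phi$-Wieferich in base $a$.

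The real content is the identification $\mathscr{P}_{\phi,P,a}=Q$ of the last paragraph, which drops out at once from Proposition~\ref{prop:properties-of-pi}\,\ref{item:tour} as soon as $P\nmid a$ is arranged; so I expect the main (though essentially bookkeeping) obstacle to be the second paragraph, where one must rule out the possibility that, for infinitely many primes $Q$, the \emph{only} square factors of $\phi_Q(a)/a$ are the finitely many primes dividing $a$ (and, when $q=2$, the degree-$1$ primes). This is exactly where one uses that $\pi_a(\phi,P^2)$ is a single fixed ideal, together with the elementary congruence $\phi_Q(a)/a\equiv Q\pmod{P^2}$ valid when $P^2\mid a$.
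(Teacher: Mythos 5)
Your proof is correct and follows essentially the same route as the paper: a square prime factor $P^2$ of $\phi_Q(a)/a$ with $P\nmid a$ forces $\mathscr{P}_{\phi,P,a}$ to be (an associate of) $Q$, whence $\phi_{\mathscr{P}_{\phi,P,a}}(a)\in P^2A$ and $P$ is $\phi$-Wieferich in base $a$. The only cosmetic differences are that the paper secures infinitude via the coprimality $\gcdd{\phi_{Q_1}(a)/a}{\phi_{Q_2}(a)/a}=1$ from Lemma~\ref{pgcddrinfeld} while you use injectivity of $Q\mapsto P_Q$ by recovering $Q$ as the Drinfeld annihilator, and your explicit elimination of the finitely many ``bad'' primes (divisors of $a$, and degree-one primes when $q=2$) is if anything more careful than the paper's.
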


\begin{proof}
    Firstly, by Lemma \ref{pgcddrinfeld}, for all distinct prime polynomials $P_1,P_2$, we have 
    $$\gcdd{\dfrac{\phi_{P_1}(a)}{a}}{\dfrac{\phi_{P_2}(a)}{a}}=1.$$
   Assume that there exists a finite number of polynomials $b\in A$ such that $\dfrac{\phi_b(a)}{a}$ is square free. Then the set 
   $$K=\left\{P \text{ monic prime} \ens \dfrac{\phi_P(a)}{a} \text{ has a square factor} \right\}$$
   is infinite. Thus, the set
   $$L=\left\{ Q \text{ prime} \ens Q^2\vert\dfrac{\phi_P(a)}{a}, P\in  K\right\}$$ is infinite.
   But for any $Q\in L$ not dividing $a$ and dividing some $\dfrac{\phi_P(a)}{a}$, we have 
   $$\mathscr{P}_{\phi,Q,a}\vert P $$ so $$\mathscr{P}_{\phi,Q,a}=P$$ since $Q$ does not divide $a$. We then have $\phi_{\mathscr{P}_{\phi,Q,a}}(a)\in Q^2A$, thus $Q$ is $\phi$-Wieferich in base $a$.
\end{proof}

The purpose of the remainder of this section is to prove the following theorem, which is the analogue for Drinfeld modules of an unconditional theorem by Granville \cite{granville1985refining}.

\begin{theoreme}\label{th:granvilledrinfeld} Let $\phi$ be an $A$-Drinfeld module defined on $A$, and let $a\in A-\phi(A)_{\operatorname{Tors}}$. Assume that for all but a finite number of primes $P$, we have
    $\pi_a(\phi,P)=\pi_a(\phi,P^2)$. Then there exist infinitely many primes $P$ such that 
    $$\pi_a(\phi,P^3)=\pi_a(\phi,P^2).$$
\end{theoreme}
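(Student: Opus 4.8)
The plan is to argue by contradiction and to manufacture super-Wieferich primes out of a failure of Mason–Stothers (Theorem \ref{th:abc}) for the ``powerful'' polynomials $\frac{\phi_b(a)}{a}$. First I would pass to valuations. By Proposition \ref{prop:properties-of-pi}\ref{item:tour} the chain $(\pi_a(\phi,Q^k))_k$ eventually strictly decreases by exactly one factor of $Q$ at each step, so with $d_Q:=v_Q\!\big(\phi_{\mathscr{P}_{\phi,Q,a}}(a)\big)$ one obtains, for $Q\nmid a$ with $\mathscr{P}_{\phi,Q,a}\mid b$, the lifting‑the‑exponent identity $v_Q\!\big(\phi_b(a)\big)=d_Q+v_Q\!\big(b/\mathscr{P}_{\phi,Q,a}\big)$, where $Q$ is $\phi$-Wieferich iff $d_Q\ge 2$ and super-$\phi$-Wieferich iff $d_Q\ge 3$. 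Assume the hypothesis, i.e. $d_Q\ge 2$ for all $Q$ outside a finite set, and suppose for contradiction that only finitely many $Q$ are super-Wieferich; let $\mathcal{E}$ be the (finite) union of these exceptional primes, so $d_Q=2$ for $Q\notin\mathcal{E}$.

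Next I would record the powerfulness of $X_b:=\frac{\phi_b(a)}{a}$. Every prime $Q\nmid a$ dividing $X_b$ has $\mathscr{P}_{\phi,Q,a}\mid b$ (by Proposition \ref{prop:properties-of-pi}\ref{item:divide}), hence $v_Q(X_b)\ge 2$ unless $Q$ is one of the finitely many non-Wieferich primes; writing $\operatorname{Exc}(X_b)=\sum_{v_Q(X_b)\ge 3}(v_Q(X_b)-2)\deg Q$ for the cube-full excess and noting that the squarefree part of $X_b$ is supported on the finite exceptional set, one gets $\deg\operatorname{rad}(X_b)=\tfrac12\big(\deg X_b-\operatorname{Exc}(X_b)\big)+O(1)$ with an absolute $O(1)$. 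Since $\phi$ is an $\F_q$-algebra homomorphism, $\phi_b(a)-\phi_{b-1}(a)=\phi_1(a)=a$, so $X_{b-1}+1=X_b$ and the triple $(X_{b-1},1,X_b)$ is pairwise coprime with $\deg X_b=\deg X_{b-1}$ (equal leading terms). Therefore, whenever Theorem \ref{th:abc} applies — that is, whenever $X_b$ is not a $p$-th power — it yields $\deg X_b\le \deg\operatorname{rad}(X_b)+\deg\operatorname{rad}(X_{b-1})-1$, which after substituting the displayed identity collapses to $\operatorname{Exc}(X_b)+\operatorname{Exc}(X_{b-1})\le O(1)$.

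Finally I would violate this bound. Fix a Wieferich prime $Q_0\notin\mathcal{E}$ with $Q_0\nmid a\,\mathscr{P}_{\phi,Q_0,a}$ and set $b=\mathscr{P}_{\phi,Q_0,a}\,Q_0^{M}$ with $M$ large and $M\not\equiv -2\pmod p$. The lifting-the-exponent identity gives $v_{Q_0}(X_b)=d_{Q_0}+M=2+M$, so on the one hand $\operatorname{Exc}(X_b)\ge M\deg Q_0\to\infty$, while on the other hand $2+M\not\equiv 0\pmod p$ forces $X_b$ to have a prime valuation prime to $p$, hence $X_b$ is \emph{not} a $p$-th power and Theorem \ref{th:abc} does apply; this contradicts the $O(1)$ bound of the previous step. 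The contradiction refutes the standing assumption and gives infinitely many super-Wieferich primes (the coprimality of the $\frac{\phi_b(a)}{a}$ from Lemma \ref{pgcddrinfeld} ensures one genuinely keeps finding new ones). The step I expect to be the real obstacle is exactly the derivative hypothesis of Mason–Stothers: one must certify that $X_b$ is not a $p$-th power — equivalently, via the characteristic-$p$ identity $g^{p}-1=(g-1)^{p}$, that $X_{b-1}$ is not one either — which is precisely what the congruence on $M$ buys; secondary technical points are the degenerate primes with $\mathscr{P}_{\phi,Q,a}=Q$ and the behavior at $p=2$ (where $p$-th powers are only squares), the latter dovetailing with the hypothesis \ref{Hyp} imposed throughout.
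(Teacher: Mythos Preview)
Your argument is correct and takes a genuinely different route from the paper. The paper argues by contradiction as you do, but then restricts to \emph{prime} $b=Q$ so that (outside the exceptional set) $v_P(\phi_Q(a))=2$ exactly and hence $\phi_Q(a)=aR_Q^2$; invoking Dirichlet to find infinitely many primes $Q\equiv 1\pmod{\theta^3}$, it sets $x_Q=\phi_{(Q-1)/\theta^3}(a)$ and obtains infinitely many integral points on the hyperelliptic curve $a+\phi_{\theta^3}(x)=ay^2$, contradicting the function-field Mordell theorem (Samuel--Voloch, Lemma~\ref{lem:mordell}). Your approach replaces this geometric input by the purely combinatorial Mason--Stothers inequality applied to the coprime triple $(X_{b-1},1,X_b)$ with $X_b=\phi_b(a)/a$, $X_b-X_{b-1}=1$; the powerfulness forced by the Wieferich hypothesis gives $\deg\operatorname{rad}(X_b)\le\tfrac12\big(\deg X_b-\operatorname{Exc}(X_b)\big)+O(1)$, and your clever choice $b=\mathscr{P}_{\phi,Q_0,a}Q_0^M$ with $M\not\equiv -2\pmod p$ simultaneously forces $\operatorname{Exc}(X_b)\to\infty$ and keeps $X_b$ out of $K^p$ so that Theorem~\ref{th:abc} applies. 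Your method is more elementary (it avoids Samuel--Voloch and Dirichlet) and in fact closer in spirit to Silverman's classical argument; the paper's route, by contrast, immediately generalizes to the $k$-th power statement in the subsequent corollary via superelliptic curves, whereas extending your argument to $k\ge 3$ would require the radical bound $\deg\operatorname{rad}(X_b)\le\tfrac1k\deg X_b+O(1)$, which Mason--Stothers alone no longer turns into a contradiction. Two minor remarks: the condition $Q_0\nmid\mathscr{P}_{\phi,Q_0,a}$ is not actually needed for your lifting-the-exponent identity (writing $b=\mathscr{P}_{\phi,Q_0,a}c$ with $c=Q_0^M$ gives $v_{Q_0}(X_b)=d_{Q_0}+v_{Q_0}(c)=2+M$ directly), and your closing sentence about Lemma~\ref{pgcddrinfeld} ``finding new ones'' is superfluous since you are deriving a contradiction, not constructing primes.
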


In other words , if $P$ is super $\phi$-Wieferich in base $a$ for a finite number of primes $P$, then there exists an infinite number of primes that are not $\phi$-Wieferich in base $a$.

We will need the following lemma.

\begin{lem}\label{lem:mordell} Let $\phi$ be a Drinfeld module of rank $r$. Then for all $a\in A-\{0\}$, the following equation, with variables $x$ and $y$, 
$$C_{\theta^3}(x)+a=ay^2$$
has a finite number of solutions in $A$.
\end{lem}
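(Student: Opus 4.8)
The plan is to reduce the equation $C_{\theta^3}(x)+a=ay^2$ to a question about rational points on a curve of genus $\geq 1$ over the function field $K=\F_q(\theta)$, and then invoke a finiteness theorem (the function-field analogue of Mordell, i.e. the Grauert–Manin theorem / non-isotriviality version of Mordell–Lang, valid when the curve is non-isotrivial). First I would write out $C_{\theta^3}(x)$ explicitly as an $\F_q$-linear (additive) polynomial in $x$ of degree $q^{3}$ in $x$, with coefficients in $A$; call it $f(x)=C_{\theta^3}(x)$, so $f$ is a separable additive polynomial and the top coefficient is a power of the leading coefficient of $C_\theta=\theta+\tau$, hence a unit times $1$. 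The equation becomes $a y^2 = f(x)+a$, which defines an affine curve $\mathcal{C}$ over $K$; I would argue that $\mathcal{C}$ is geometrically irreducible (the right-hand side $f(x)+a$, as a polynomial in $x$, is not a perfect square in $\overline{K}[x]$ because $f$ is separable of degree $q^3\geq 2$, so it has distinct roots, and at least one root occurs with odd multiplicity $1$), and that its smooth projective model has genus $\geq 1$: indeed a hyperelliptic-type curve $y^2=h(x)$ with $\deg h = q^3 \geq 8$ (note $q\geq 2$) has genus roughly $(q^3-1)/2 \gg 1$ when $\mathrm{char}\neq 2$; in characteristic $2$ one instead treats $ay^2=f(x)+a$ as an Artin–Schreier-type or inseparable situation and I would rewrite it, using that $y\mapsto y^2$ is the Frobenius, to see that solutions $(x,y)\in A^2$ still correspond to integral points on a curve of positive genus (or handle $p=2$ by noting $y^2$ ranges over $q$-th… no — over squares, which in char $2$ is the image of Frobenius, so the curve is $\mathcal C: a\,y_0 = f(x)+a$ after the substitution $y_0=y^2$, but one must track that $y_0$ is constrained to be a square; the cleanest route in all characteristics is to observe the projection $(x,y)\mapsto x$ and bound heights directly).

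The core finiteness input is the following: a smooth projective curve of genus $\geq 1$ over $K=\F_q(\theta)$ that is non-isotrivial has only finitely many $K$-rational points (Manin, Grauert; for $\geq 2$ it is the function-field Mordell conjecture, and for genus $1$ it is the Lang–Néron / Mordell–Weil finiteness of the rank together with the fact that we only need finitely many *integral* points, which follows from Siegel's theorem for function fields, due to Lang). So the steps are: (1) expand $C_{\theta^3}$ and set up $\mathcal C/K$; (2) check geometric irreducibility; (3) compute/estimate the genus and verify it is $\geq 1$; (4) verify non-isotriviality — this is where the coefficients genuinely involving $\theta$ matter, since $f(x)+a$ has coefficients that are non-constant polynomials in $\theta$, so the curve cannot be defined over $\overline{\F_q}$ after any base change; (5) conclude there are finitely many $K$-points, a fortiori finitely many solutions in $A^2$.

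The main obstacle I anticipate is characteristic $2$, where "$y^2$" is purely inseparable and the naive hyperelliptic picture degenerates: one cannot simply say $\mathcal C$ is the double cover $y^2=h(x)$ in the usual sense. I would handle this by the substitution $z=y^2$, noting $z$ must lie in the image of the Frobenius $A\to A$, i.e. $z=w^q$ is automatic only when $q$ is a power of $2$ and actually $y^2=y^{2}$ with $\mathbb F_q \supseteq \mathbb F_2$, so $y\mapsto y^2$ is a bijection on $\F_q$ but NOT a Frobenius unless $q=2$; so for $q=2$, $y\mapsto y^2=y$ acts as Frobenius on elements of $\F_2$ but on $A=\F_2[\theta]$ it is injective with image $\F_2[\theta^2]\subsetneq A$. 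The safe uniform argument: the map $\mathcal C\to \mathbb A^1_x$ has degree $\leq 2$, so $K(\mathcal C)/K(x)$ has degree $\leq 2$; if it is degree $2$ and separable we are in the hyperelliptic case and genus $\geq 1$ as above; if it is inseparable (only possible when $\mathrm{char}=2$) then $K(\mathcal C)=K(x^{1/2}, \dots)$ is a purely inseparable extension and every $\overline K$-point with $x\in A$ forces $x$ to be a square and $a y^2 = f(x)+a$ to be solvable, which one analyzes by descent on $x$; alternatively, restrict to $q$ odd for the clean statement and note the paper only invokes this lemma with a specific $\phi$. I would present the $\mathrm{char}\neq 2$ case in full via function-field Mordell and remark that the $\mathrm{char} = 2$ case follows by an analogous Artin–Schreier analysis, which is the part most likely to need care. $\qed$
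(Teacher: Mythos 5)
Your overall strategy coincides with the paper's: view $ay^2=\phi_{\theta^3}(x)+a$ as a hyperelliptic curve over $K=\F_q(\theta)$, use that $\phi_{\theta^3}(x)$ is a separable additive polynomial to get a lower bound on the genus, and conclude by the function-field Mordell conjecture (the paper cites Samuel and Voloch; your Grauert--Manin reference is the same circle of results). Two adjustments to your setup: the polynomial in play is $\phi_{\theta^3}$, of degree $q^{3r}$ in $x$ where $r$ is the rank of $\phi$ (the ``$C$'' in the statement is a slip for $\phi$), so the genus in odd characteristic is $(q^{3r}-1)/2\geq 2$ and your hedging about a possible genus-$1$ case and Siegel's theorem is unnecessary. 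Also, since $\phi_{\theta^3}(x)+a$ is separable, \emph{all} of its roots are simple, which settles irreducibility and the genus count at once in odd characteristic.

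The genuine gap is the one you flag yourself and then do not close: characteristic $2$. There the cover $y^2=h(x)$ is purely inseparable over the $x$-line and the curve is in fact \emph{rational}: over $\overline{K}$ every monomial of $h$ except the linear term $(\theta^3/a)\,x$ is a square, so writing $h(x)=s(x)^2+(\theta^3/a)x$ one gets $(y+s(x))^2=(\theta^3/a)x$, a parametrization by $w=y+s(x)$. Hence no genus-based finiteness theorem can apply, and the ``analogous Artin--Schreier analysis'' you defer to is not a cosmetic detail but a proof that has to be different in kind (for instance, differentiating $ay^2=\phi_{\theta^3}(x)+a$ with respect to $\theta$ to eliminate the squares and bound $\deg x$ directly). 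To be fair, the paper's own proof is vulnerable at exactly this point --- it quotes a genus $(q^{3r}-2)/2$ for $q$ even from a statement of Stichtenoth that presupposes odd characteristic --- so your instinct that this is ``the part most likely to need care'' is sound; but an acknowledged missing case is still a missing case. A secondary soft spot: non-isotriviality, which Samuel's theorem requires, is asserted in your step (4) on the grounds that the coefficients involve $\theta$; that does not by itself rule out isotriviality and deserves an actual verification (e.g.\ that the branch locus of the hyperelliptic involution varies nontrivially with $\theta$).
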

\begin{proof}
     The polynomial $\phi_{\theta^3}(x)$ is of the form 
$$\phi_{\theta^3}(x)=\theta^3 x+\Sum\limits_{i=1}^{3r} a_{i,\theta^3} x^{q^i}\in A[x]$$ therefore it has simple roots and degree (in $x$) $q^{3r}>4$. According to \cite[Lemma 6.2.2]{stichtenoth2009algebraic}, the curve defined by 
$$\phi_{\theta^3}(x)+a=ay^2$$ is a hyperelliptic curve of genus 
$$g_\phi=\left\{\begin{aligned} &\dfrac{q^{3r}-1}{2} \text{ if $q$ is odd}, \\
& \dfrac{q^{3r}-2}{2} \text{ if $q$ is even},\end{aligned}\right.$$
and therefore of genus $g_\phi>2$. According to Samuel-Voloch's theorem, see \cite{samuel1966lectures,voloch1991diophantine}, we obtain the result.
\end{proof}
\begin{proof}[Proof of Theorem \ref{th:granvilledrinfeld}]
    Suppose, by contradiction, that $\pi_a(\phi,P)=\pi_a(\phi,P^2)$ for an infinite number of $P$ and that 
    $$\pi_a(\phi, P^2)=\pi_a(\phi,P^3)$$ for a finite set $E_1$ of primes.
        Let $E_2$ be the finite set of primes $P$ such that $P$ divides $a$. In particular, if $P\notin E_2$, then $\pi_a(\phi,P)\neq A$. Let $E=E_1\cup E_2$ that is a finite set.

Let $Q$ be a prime and $P\notin E$ such that $P\vert \phi_{Q}(a)$. We therefore have $Q\in \pi_a(\phi,P)=\pi_a(\phi,P^2)$ and thus $P^2\vert \phi_Q(a)$. Furthermore, we have that $P^3\nmid \phi_Q(a)$, otherwise we would have $\phi_Q(a)\in \pi_a(\phi,P^3)=P\pi_a(\phi,P^2)$ and therefore we would have that $P$ strictly divides $Q$, which is absurd.

Thus, for any prime $Q$ and $P\notin E$ dividing $C_Q(a)$, we have 
$$v_P(C_Q(a))=2.$$ 
If we denote by $$S=\left\{ Q \text{ primes }\ens \text{ the prime divisors of } \dfrac{\phi_Q(a)}{a} \text{ do not belong to } E_1\right\},$$ then  we have for all $Q\in S$
$$\phi_Q(a)=a R_Q^2,$$ with $R_Q\in A$ a non-zero polynomial.

Next, for all distinct prime numbers $Q_1$ and $Q_2$, we have by Lemma \ref{pgcddrinfeld}
\begin{equation}\label{eq:pgcdcarlitz}(\phi_{Q_1}(a), \phi_{Q_2}(a))=a.\end{equation}

Since the sequence $(\phi_Q(a))_{Q \text{ prime}}$ is infinite (and the terms are pairwise prime to each other), and since $E_1$ is finite, we first deduce that $S$ is infinite.

Furthermore, by Equation \eqref{eq:pgcdcarlitz}, the polynomials $R_Q(a)$ are pairwise distinct (even coprime between themselves) for $Q\in S$. Moreover, according to Dirichlet's theorem, we know that the set of prime numbers $Q$ such that 
$$Q= 1 \pmod{\theta^3}$$
is infinite. Combined with Equation \eqref{eq:pgcdcarlitz}, we deduce that the following set is infinite:
$$W=\left\{Q \text{ primes} \ens Q= 1\pmod{\theta^3} , \text{ the prime divisors $P$ of $\dfrac{\phi_Q(a)}{a} $ do not live in $E_1$}\right\}.$$
For all $Q\in W$, if we write $x_Q=\phi_{\frac{Q-1}{\theta^3}}(a)\in A$, which are pairwise distinct since $a\notin \phi(A)_{\operatorname{Tors}}$,then we have 
$$\phi_{\theta^3(x_Q)}= \phi_{Q-1}(a)=\phi_Q(a)-a=aR_Q^2-a.$$
Thus, the equation
$$a+\phi_{\theta^3}(x)=a y^2$$ has infinitely many solutions in $A$, which contradicts Lemma \ref{lem:mordell}.
\end{proof}

Note that for all $k\geq 2$, the curve $$ C_{3,k}: a+\phi_{\theta^3}(x)=ay^k$$ has a finite number of points. Indeed, on the one hand, if $k=2l$ is even, then we reduce to the curve in Lemma \ref{lem:mordell} by setting $Y=y^l$, which has a finite number of integer points.

On the other hand, if $k$ is odd, then the curve $C_{3,k}$ is non-isotrivial and of genus $g$ given by (see \cite[Proposition 3.7.3]{stichtenoth2009algebraic})
$$g=\left\{ \begin{aligned} &\dfrac{(k-1)(q^{3r-1})}{2} \text{ if } \gcdd{p}{k}=1, \\ 
&\dfrac{(p-1)(q^{3r-1})}{2} \text{ otherwise, that is if } p\vert k,\end{aligned}\right.$$
so has genus $>2$. According to Samuel-Voloch's theorem, it admits a finite number of integer points. Then the previous proof generalizes trivially to obtain the following result.

\begin{cor}Let $\phi$ be a Drinfeld $A$-module defined on $A$, let $k\geq 2$ be an integer and $a\in A-\phi(A)_{\operatorname{Tors}}$. Assume that for all but a finite number of primes $P$, we have
    $\pi_a(\phi,P)=\pi_a(\phi,P^k)$. Then, there are infinitely many primes $P$ such that 
    $$\pi_a(\phi,P)=\pi_a(\phi,P^{k+1}).$$
\end{cor}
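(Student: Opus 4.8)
The plan is to mimic the proof of Theorem~\ref{th:granvilledrinfeld} essentially verbatim, replacing the exponent $2$ by a general exponent $k\geq 2$ and using the generalized curve $C_{3,k}\colon a+\phi_{\theta^3}(x)=ay^k$ together with the already-established fact that $C_{3,k}$ has only finitely many integral points. Concretely, I would argue by contradiction: assume $\pi_a(\phi,P)=\pi_a(\phi,P^k)$ for all but finitely many $P$, and that $\pi_a(\phi,P^k)=\pi_a(\phi,P^{k+1})$ holds only for $P$ in a finite set $E_1$. Let $E_2$ be the finite set of primes dividing $a$, and set $E=E_1\cup E_2$, so that for $P\notin E_2$ we have $\pi_a(\phi,P)\neq A$.

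The key step is the valuation computation. Fix a prime $Q$ and a prime $P\notin E$ with $P\mid\phi_Q(a)$. Then $Q\in\pi_a(\phi,P)=\pi_a(\phi,P^k)$, so $P^k\mid\phi_Q(a)$; on the other hand $P^{k+1}\nmid\phi_Q(a)$, because otherwise $\phi_Q(a)\in\pi_a(\phi,P^{k+1})$ and, since $P\notin E_1$ forces $\pi_a(\phi,P^{k+1})=P\,\pi_a(\phi,P^k)$ (here one invokes part~\ref{item:tour} of Proposition~\ref{prop:properties-of-pi} to know this is the relevant alternative, exactly as in the $k=2$ case), we would get $P$ strictly dividing $Q$, absurd. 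Hence $v_P(\phi_Q(a))=k$ for every prime $Q$ and every $P\notin E$ dividing $\phi_Q(a)$. Consequently, letting
$$S=\left\{Q\text{ prime}\ens\text{the prime divisors of }\tfrac{\phi_Q(a)}{a}\text{ do not belong to }E_1\right\},$$
we obtain $\phi_Q(a)=aR_Q^k$ for some non-zero $R_Q\in A$ whenever $Q\in S$.

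Next I would check that $S$ is infinite and, intersecting with the arithmetic progression, that
$$W=\left\{Q\text{ prime}\ens Q\equiv1\pmod{\theta^3},\ \text{prime divisors of }\tfrac{\phi_Q(a)}{a}\text{ not in }E_1\right\}$$
is infinite: the polynomials $\phi_Q(a)$ are pairwise coprime modulo $a$ by Lemma~\ref{pgcddrinfeld} (Equation~\eqref{eq:pgcdcarlitz}), so the finitely many primes of $E_1$ can divide only finitely many of them, and Dirichlet's theorem supplies infinitely many $Q\equiv1\pmod{\theta^3}$. For $Q\in W$, set $x_Q=\phi_{\frac{Q-1}{\theta^3}}(a)\in A$; these are pairwise distinct since $a\notin\phi(A)_{\operatorname{Tors}}$, and $\phi_{\theta^3}(x_Q)=\phi_{Q-1}(a)=\phi_Q(a)-a=aR_Q^k-a$. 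Thus $(x_Q,R_Q)$ yields infinitely many integral points on $C_{3,k}\colon a+\phi_{\theta^3}(x)=ay^k$, contradicting the finiteness statement recalled just before the corollary (which rests on Samuel--Voloch for the odd case and reduces to Lemma~\ref{lem:mordell} for the even case). This contradiction proves the corollary.

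I do not anticipate a genuine obstacle: the one point requiring slight care is the reduction to $P^{k+1}\nmid\phi_Q(a)$, where one must be sure that $P\notin E_1$ really does force the equality $\pi_a(\phi,P^{k+1})=P\,\pi_a(\phi,P^k)$ rather than $\pi_a(\phi,P^{k+1})=\pi_a(\phi,P^k)$ — this is exactly the meaning of $E_1$ being the set where the latter equality holds, combined with part~\ref{item:tour} of Proposition~\ref{prop:properties-of-pi}, so it goes through as in the proof of Theorem~\ref{th:granvilledrinfeld}. Everything else is a formal transcription of that proof with $2$ replaced by $k$.
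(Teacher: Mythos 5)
Your proposal is correct and follows exactly the route the paper intends: the paper itself establishes the finiteness of integral points on $C_{3,k}\colon a+\phi_{\theta^3}(x)=ay^k$ for all $k\geq 2$ and then simply asserts that ``the previous proof generalizes trivially,'' and your write-up is precisely that generalization, with the valuation argument $v_P(\phi_Q(a))=k$ and the passage to $\phi_Q(a)=aR_Q^k$ carried out correctly. Nothing to add.
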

The previous result tells us that, if there exists a rank $k\geq 2$ such that $\pi_a(\phi,P^k)= \pi_a(\phi,P^{k+1})$ for a finite number of monic irreducible polynomials $P$, then there exists an infinite number of $P$ such that $\pi_a(\phi,P^{k-1})\neq \pi_a(\phi,P^k)$.

\section{Mersenne numbers for Drinfeld modules}\label{sec:Mersenne}

The purpose of this section is to introduce Mersenne numbers in the context of Drinfeld modules, and to relate them to the annihilator polynomials $\mathscr{P}_{\phi,P,a}$ and to Wieferich primes. Recall that in this work, primes does not mean monic irreducible but only irreducible.

\begin{defi}
Let $a\in A-\{0\}$ and $\phi$ be a Drinfeld module of rank $r$.
\begin{enumerate}
    \item A $\phi$-Mersenne number in base $a$ is a polynomial of the form $ \phi_P(a)$ with $P$ a prime polynomial.
\item 
   A prime $\phi$-Mersenne number in base $a$ is a prime polynomial of the form $ \phi_P(a)$ with $P$ a prime polynomial. 
    \end{enumerate}
\end{defi}

\begin{lem}\label{lem:mersennepremier}
    Let $a\in A$. Then for any prime $P$, if $\phi_P(a)$ is prime, then we have one of the following two cases:
    \begin{enumerate}
        \item either $a$ is prime and $a\in \phi(A)_{\operatorname{Tors}}$,
        \item or $a\in \F_q^\ast$.
    \end{enumerate}
\end{lem}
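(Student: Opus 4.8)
The plan is to exploit the fact that the additive polynomial $\phi_P(X)$ is divisible by $X$, which forces $a \mid \phi_P(a)$ and, once $\phi_P(a)$ is required to be irreducible, leaves essentially no room for $a$.

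I would start from the observation that, since $\phi$ is an $\F_q$-algebra homomorphism and the coefficient of $\tau^0$ in $\phi_\theta$ is $\theta$, the coefficient of $\tau^0$ in $\phi_P$ is $P$ (the $\tau^0$-part is multiplicative in $A\{\tau\}$, so it sends $\theta^k$ to $\theta^k$, then extend $\F_q$-linearly). Writing $\phi_P$ as an additive polynomial this says
$$\phi_P(X) = P\,X + c_1 X^q + \dots + c_{r\deg P}\,X^{q^{r\deg P}}, \qquad c_i\in A,$$
so $X \mid \phi_P(X)$ in $A[X]$, and therefore $a \mid \phi_P(a)$ in $A$.

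Now suppose $\phi_P(a)$ is prime; in particular it is nonzero, so $a \neq 0$. Since $a$ divides the irreducible element $\phi_P(a)$, we are in one of two cases: either $a$ is a unit, i.e.\ $a \in \F_q^\ast$, which is conclusion $(2)$; or $a$ is an associate of $\phi_P(a)$, so that $a$ is itself prime and $\phi_P(a) = u\,a$ for some $u \in \F_q^\ast$. It then remains, in this second case, to prove $a \in \phi(A)_{\operatorname{Tors}}$. For this I would use $\phi_{P^n} = \phi_P^{\circ n}$ together with the $\F_q$-linearity of $\phi_{P^n}$: starting from $\phi_P(a) = u a$, an immediate induction gives $\phi_{P^n}(a) = u^n a$ for every $n \geq 1$. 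Taking $n = q-1$ and using $u^{q-1} = 1$ yields $\phi_{P^{q-1}}(a) = a$, i.e.\ $\phi_{P^{q-1}-1}(a) = 0$; since $\deg P \geq 1$ the polynomial $P^{q-1}-1$ is nonzero, so $a$ is a nonzero torsion point of $\phi$.

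I do not expect a genuine obstacle. The only points needing a little care are: justifying that the linear coefficient of $\phi_P(X)$ is exactly $P$; the degenerate case $q = 2$, where the argument is unchanged since then necessarily $u = 1$; and the trivial check that $P^{q-1}-1 \neq 0$.
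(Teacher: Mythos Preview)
Your proof is correct and follows the same strategy as the paper: use $a \mid \phi_P(a)$ to force either $a \in \F_q^\ast$ or $a$ prime with $\phi_P(a) = ua$ for some $u \in \F_q^\ast$, and then conclude $a$ is torsion. For this last step the paper is slightly more direct: since $u \in \F_q$ one has $\phi_u(a) = ua$, so $\phi_P(a) = \phi_u(a)$ gives $\phi_{P-u}(a) = 0$ in one line, without iterating to $P^{q-1}-1$.
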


\begin{proof}
     We know that $a$ divides $\phi_P(a)$. Thus, on the one hand, for $\phi_P(a)$ to be prime, we must have $a$ to be a prime polynomial or $a\in \F_q^\times$. 
     Suppose that $a$ is a prime polynomial, and denote by $\phi_P(a)=ba$ where $b\in A$. We then have the following implications:
     $$\phi_P(a) \text{ prime } \Rightarrow b\in \F_q^\times\Rightarrow \phi_{P-b}(a)=0\Rightarrow a\in \phi(A)_{\operatorname{Tors}}.$$
\end{proof}

We have the following lemma that gives a characterization of the primality of annihilator polynomials $\mathscr{P}_{\phi,Q,a}$.
\begin{lem}
   Let $a\in A$ and let $Q$ be a prime in $A$ that does not divide $a$. Then we have the following equivalence:
    $$\mathscr{P}_{\phi,Q,a} \text{ is prime} \Leftrightarrow Q\vert \phi_P(a) \text{ for some monic irreducible polynomial $P$}. $$
\end{lem}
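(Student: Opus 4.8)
The plan is to recall that $\mathscr{P}_{\phi,Q,a}$ is the monic generator of $\pi_a(\phi,Q) = \{b \in A \ens \phi_b(a) \in QA\}$, so that for any $m \in A$ we have $Q \mid \phi_m(a)$ if and only if $\mathscr{P}_{\phi,Q,a} \mid m$. The key structural fact I would exploit is that $\mathscr{P}_{\phi,Q,a}$ is a divisor of $g_{Q,\phi}$ (by point \ref{item:divide} of Proposition \ref{prop:properties-of-pi}), and in particular is a non-zero non-unit polynomial as soon as $a \notin QA$, since in that case $\pi_a(\phi,Q) \neq A$ by point (2) of that proposition and $\pi_a(\phi,Q) \neq 0$ by point (1). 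So under the hypothesis $Q \nmid a$, the polynomial $\mathscr{P}_{\phi,Q,a}$ is a genuine polynomial of degree $\geq 1$, and the question of its primality is meaningful.

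For the direction ($\Leftarrow$): suppose $Q \mid \phi_P(a)$ for some monic irreducible $P$. Then $\mathscr{P}_{\phi,Q,a} \mid P$. Since $P$ is irreducible and $\mathscr{P}_{\phi,Q,a}$ is a monic non-unit divisor of $P$, we must have $\mathscr{P}_{\phi,Q,a} = P$ (up to the monic normalization $P$ already carries), hence $\mathscr{P}_{\phi,Q,a}$ is prime.

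For the direction ($\Rightarrow$): suppose $\mathscr{P}_{\phi,Q,a}$ is prime. Write $P$ for the monic prime polynomial equal (up to a unit in $\F_q^\times$) to $\mathscr{P}_{\phi,Q,a}$; more precisely, if $\mathscr{P}_{\phi,Q,a}$ is already monic, take $P = \mathscr{P}_{\phi,Q,a}$, which is then a monic irreducible polynomial. Since $\mathscr{P}_{\phi,Q,a} \mid P$ trivially (they are equal), we get $\phi_P(a) \in \pi_a(\phi,Q)$ applied the other way: $\mathscr{P}_{\phi,Q,a} \mid P$ gives $P \in \pi_a(\phi,Q)$, i.e. $\phi_P(a) \in QA$, so $Q \mid \phi_P(a)$, with $P$ monic irreducible as required. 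The only point requiring a word of care is that $\mathscr{P}_{\phi,Q,a}$ is by definition monic, so it \emph{is} the monic irreducible polynomial $P$ itself, and no normalization subtlety arises.

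The main obstacle — really the only thing to check carefully rather than a genuine difficulty — is making sure the hypothesis $Q \nmid a$ is used to guarantee $\mathscr{P}_{\phi,Q,a} \neq 1$, so that "prime" is not vacuously false on the left-hand side and the equivalence is not degenerate; and, symmetrically, confirming that on the right-hand side $P$ ranges over \emph{monic} irreducibles, matching the monic normalization built into the definition of $\mathscr{P}_{\phi,Q,a}$. Both are immediate from Proposition \ref{prop:properties-of-pi} and the defining property of $\mathscr{P}_{\phi,Q,a}$.
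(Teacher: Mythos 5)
Your proof is correct and follows essentially the same route as the paper: both directions rest on the defining property $Q\mid\phi_m(a)\Leftrightarrow \mathscr{P}_{\phi,Q,a}\mid m$, with the hypothesis $Q\nmid a$ used exactly as in the paper to rule out $\mathscr{P}_{\phi,Q,a}=1$ in the backward direction. Nothing further to add.
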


\begin{proof}
    Assume first that $\mathscr{P}_{\phi,Q,a}=P $ is a prime polynomial. We then have
    $$\phi_P(a)=\phi_{\mathscr{P}_{\phi,Q,a}}(a)= 0\pmod{Q}.$$

    Conversely, suppose that $Q$ divides $\phi_P(a)$ for a certain irreducible polynomial $P$. We therefore have $\mathscr{P}_{\phi,Q,a}\vert P$, so $\mathscr{P}_{\phi,Q,a}=1$ or $\mathscr{P}_{\phi,Q,a}=P$. But since $Q$ does not divide $a$, we have $\mathscr{P}_{\phi,Q,a}\neq 1$, so 
    $$\mathscr{P}_{\phi,Q,a}=P.$$
\end{proof}

   \begin{prop}\label{prop:mersennepaswieferich}
Assume that $a\notin \phi(A)_{\operatorname{Tors}}$. Let $M_P= \phi_P(a)$ be a $\phi$-Mersenne prime $Q$ in base $a$. Then $Q=M_P$ is not $\phi$-Wieferich in base $a$. Moreover, if $P$ satisfies \ref{Hyp} and $a\in \phi(A)_{\operatorname{Tors}}$, then $M_P=\phi_P(a)$ is not a prime polynomial.
\end{prop}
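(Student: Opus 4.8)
The plan is to pin down the Drinfeld annihilator $\mathscr{P}_{\phi,Q,a}$ when $Q:=M_P=\phi_P(a)$ is itself a prime polynomial, and then to read off the $Q$-adic valuation of $\phi_{g_{Q,\phi}}(a)$ directly. For the first assertion, set $Q=\phi_P(a)$ and assume it is prime (normalising it to be monic, which changes nothing). First, $Q\nmid a$: since $a\mid\phi_P(a)=Q$, if also $Q\mid a$ then $Q$ and $a$ are associates, $\phi_P(a)=ba$ with $b\in\F_q^\times$, and hence $\phi_{P-b}(a)=\phi_P(a)-\phi_b(a)=0$ with $P-b\neq 0$ (as $P$ is irreducible, so of degree $\geq 1$), contradicting $a\notin\phi(A)_{\operatorname{Tors}}$. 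Since $\phi_P(a)=Q\equiv 0\pmod{Q}$, we get $P\in\pi_a(\phi,Q)$, so $\mathscr{P}_{\phi,Q,a}\mid P$; and $\mathscr{P}_{\phi,Q,a}\neq 1$, for otherwise $\phi_1(a)=a\equiv 0\pmod{Q}$. As $P$ is irreducible, $\mathscr{P}_{\phi,Q,a}$ is the monic normalisation of $P$ and $\pi_a(\phi,Q)=(P)$. By point \ref{item:divide} of Proposition \ref{prop:properties-of-pi} (equivalently Fermat's little theorem for Drinfeld modules) $\mathscr{P}_{\phi,Q,a}\mid g_{Q,\phi}$, so I write $g_{Q,\phi}=Ph$ with $h\in A$; as $\deg g_{Q,\phi}=\deg Q$ and $\deg P\geq 1$, one has $\deg h<\deg Q$, hence $Q\nmid h$.

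Writing $\phi_h=h\tau^{0}+\sum_{i\geq 1}c_i\tau^{i}$, I then compute
$$\phi_{g_{Q,\phi}}(a)=\phi_{Ph}(a)=\phi_h(\phi_P(a))=\phi_h(Q)=hQ+\sum_{i\geq 1}c_i\,Q^{q^{i}},$$
where every term after the first lies in $Q^{q}A\subseteq Q^{2}A$, while $v_Q(hQ)=v_Q(h)+1=1$. Hence $v_Q(\phi_{g_{Q,\phi}}(a))=1<2$, i.e.\ $\phi_{g_{Q,\phi}}(a)\not\equiv 0\pmod{Q^{2}}$, so $Q=M_P$ is not $\phi$-Wieferich in base $a$ (equivalently $\pi_a(\phi,Q)\neq\pi_a(\phi,Q^{2})$).

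For the ``moreover'' statement I argue by contradiction: suppose $a\in\phi(A)_{\operatorname{Tors}}$, that $P$ satisfies \ref{Hyp}, and that $Q:=\phi_P(a)$ is prime. By Lemma \ref{lem:mersennepremier}, either $a\in\F_q^\times$, or $a$ is a prime polynomial which is $\phi$-torsion. If $Q\nmid a$, there is a quick contradiction: by the fifth point of Proposition \ref{prop:properties-of-pi}, torsionness of $a$ forces $\pi_a(\phi,Q)=\pi_a(\phi,Q^{k})$ for all $k\geq 1$, so $Q$ is $\phi$-Wieferich in base $a$, contradicting the computation $v_Q(\phi_{g_{Q,\phi}}(a))=1$ above (which used only $Q\nmid a$). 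So necessarily $Q\mid a$, whence $Q$ and $a$ are associates, $\phi_P(a)=ba$ with $b\in\F_q^\times$, and $\phi_{P-b}(a)=0$; letting $m_a\in A$ be the monic minimal annihilator of $a$ (of degree $\geq 1$), this reads $m_a\mid P-b$, and I would conclude by comparing $\deg\phi_P(a)$ with $\deg a$, invoking \ref{Hyp} to guarantee $\deg P$ is large enough that $\phi_P(a)$ cannot collapse to $ba$, a polynomial of degree only $\deg a$. In the subcase $a\in\F_q^\times$, the identity $\phi_c(a)=a\,\phi_c(1)$ first promotes $a\in\phi(A)_{\operatorname{Tors}}$ to $1\in\phi(A)_{\operatorname{Tors}}$, and the same degree argument runs with $a$ replaced by $1$.

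The routine ingredients are the degree bookkeeping ($\deg g_{Q,\phi}=\deg Q$ and the $\tau$-expansion of $\phi_h(Q)$) and the appeals to Proposition \ref{prop:properties-of-pi}. The genuinely delicate point is the last case of the ``moreover'' part when $q=2$: there $a$ can be a very small polynomial that is nonetheless $\phi$-torsion (for instance $a=\theta$ for the Carlitz module when $q=2$), and $\phi_P(a)$ need not grow in degree, so the comparison between $\deg\phi_P(a)$, $\deg a$ and $\deg m_a$ must be set up carefully --- this is exactly where the hypothesis \ref{Hyp} on $P$ is indispensable.
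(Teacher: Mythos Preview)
For the first assertion your argument is correct but more involved than the paper's. The paper dispatches it in one line: since $\phi_P(a)=Q$ and $a\notin\phi(A)_{\operatorname{Tors}}$ forces $Q\neq 0$, one has $v_Q(\phi_P(a))=1$, so $P\in\pi_a(\phi,Q)\setminus\pi_a(\phi,Q^2)$, whence $\pi_a(\phi,Q)\neq\pi_a(\phi,Q^2)$. Your detour through the factorisation $g_{Q,\phi}=Ph$ and the explicit computation $v_Q(\phi_h(Q))=1$ reaches the same conclusion and is valid, just longer; the intermediate identification $\mathscr{P}_{\phi,Q,a}=P$ is correct for the reasons you give.

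For the ``moreover'' part there is a genuine gap. Your treatment of the subcase $Q\mid a$ (equivalently $\phi_P(a)=ba$ with $b\in\F_q^\times$) is not a proof: you say you ``would conclude by comparing $\deg\phi_P(a)$ with $\deg a$, invoking \ref{Hyp} to guarantee $\deg P$ is large enough that $\phi_P(a)$ cannot collapse to $ba$'', but when $a$ is torsion with annihilator $m_a$, the value $\phi_P(a)$ depends only on $P\bmod m_a$, so $\deg\phi_P(a)$ is bounded independently of $\deg P$ and no growth comparison is available. Concretely, for $q=2$, the Carlitz module, and $a=\theta$ (which is torsion with $m_a=\theta$), one has $C_P(\theta)=\theta$ for every prime $P\equiv 1\pmod\theta$, whatever $\deg P$ is. The paper's approach to this half is entirely different and does not argue via $Q$ or via Proposition~\ref{prop:properties-of-pi}(5) at all: it invokes an external result (from the author's thesis) asserting that $\phi_{g_{P,\phi}}(a)=0$ whenever $a$ is torsion and $P$ satisfies \ref{Hyp}; writing $P=g_{P,\phi}+r_{P,\phi}$ then gives $\phi_{r_{P,\phi}}(a)=\phi_P(a)=Q$, hence $\mathscr{P}_{\phi,Q,a}\mid r_{P,\phi}$, and combined with $\mathscr{P}_{\phi,Q,a}=P$ and $\deg r_{P,\phi}<\deg P$ this forces $r_{P,\phi}=0$ and $Q=0$. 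You had no way to reproduce that citation, but the degree heuristic you offer in its place does not go through.
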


\begin{proof}
Assume first that $a\in \phi(A)_{\operatorname{Tors}}$. Set $P_0=\beta P$ with $\beta \in \F_q^\ast$ such that $P_0$ is a monic prime. Since $(q,\deg(P))\neq (2,1)$, we have by \cite[Corollary 2.6.5]{lucas:tel-05296002}
$$\phi_{g_{P_0,\phi}}(a)=0.$$
Assume, for contradiction, that $\phi_P(a)=Q$ is a prime polynomial, thus $\phi_{P_0}(a)=\beta Q:=Q_0$ is a prime. Then we have $\mathscr{P}_{\phi, Q_0, a}=P_0$. Moreover, 
$$\phi_{P_0}(a)=\phi_{P_0-g_{P_0,\phi}}(a)=\phi_{r_{P_0,\phi}}(a)=Q.$$
It follows that
$$P_0\vert \mathscr{P}_{\phi, Q_0,a}\vert r_{P_0,\phi},$$
hence $r_{P_0,\phi}=0$, so $\mathscr{P}_{\phi,Q_0,a}=0$, a contradiction.

Now let $a\notin \phi(A)_{\operatorname{Tors}}$ be such that $M_P= \phi_P(a)$ is a $\phi$-Mersenne $Q$ in base $a$. Then, 
$$P\in \pi_a(\phi,Q)-\pi_a(\phi,Q^2),$$ that is, $Q$ is not $\phi$-Wieferich in base $a$.
   
\end{proof}

The goal in the remainder of this section is to prove the following theorem.

\begin{theoreme}\label{th:mersenne} Let $a\in A$ and $\phi$ be a Drinfeld module.
There exist infinitely many primes $P$ such that $\phi_P(a)$ is not prime; in other words, there exist infinitely many 
$\phi$-Mersenne numbers in base $a$ which are not prime.
\end{theoreme}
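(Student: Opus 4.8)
The plan is to split into the two cases from Lemma \ref{lem:mersennepremier}: either $a \in \F_q^\ast$, or $a$ is a prime polynomial lying in $\phi(A)_{\operatorname{Tors}}$; in every other situation $\phi_P(a)$ is never prime, so the statement is trivially true (there are infinitely many primes $P$, and all the $\phi_P(a)$ fail to be prime). So the real content is to handle these two exceptional cases, plus the degenerate cases $a=0$ and $a$ a non-prime, non-unit polynomial, which are again immediate since $a \mid \phi_P(a)$ forces $\phi_P(a)$ non-prime for all $P$ (taking care that if $a=0$ then $\phi_P(0)=0$ is not prime).

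First I would dispose of the case $a \in \F_q^\ast$. Here $\phi_P(a) \in A$ has degree $q^{r\deg P}\deg a \cdot 0$... more precisely $\deg \phi_P(a) = q^{r\deg P}\cdot \deg a = 0$? No — if $\deg a = 0$ then $\phi_P(a)$ has degree $\deg(\theta\,a) \vee \deg(a_i a^{q^i})$ governed by the coefficients of $\phi_P$, which grows with $\deg P$. The key observation: consider the derivative $\frac{d}{d\theta}\phi_P(a)$, or better, use Lemma \ref{pgcddrinfeld}. For distinct primes $P_1, P_2$ we have $\gcd(\phi_{P_1}(a),\phi_{P_2}(a)) = \phi_{\gcd(P_1,P_2)}(a) = \phi_1(a) = a \in \F_q^\ast$, so the $\phi_P(a)$ are pairwise coprime. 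If only finitely many are non-prime, then infinitely many monic associates of $\phi_P(a)$ are distinct primes. I would then look for a fixed prime $Q$ forced to divide infinitely many of them: for instance, reduce $\phi$ modulo a suitable $Q$ and observe that $b \mapsto \phi_b(a) \bmod Q$ factors through a finite quotient, so $Q \mid \phi_P(a)$ for infinitely many $P$ (those $P$ in a fixed nonzero residue class modulo $\mathscr{P}_{\phi,Q,a}$, of which there are infinitely many primes by Dirichlet for $\F_q[\theta]$), provided $\mathscr{P}_{\phi,Q,a} \neq 1$, i.e. provided we pick $Q \nmid a$ with $\phi_b(a) \equiv 0 \pmod Q$ for some $b$ — which holds for any $Q$ dividing some $\phi_{b_0}(a)$ with $b_0 \notin \F_q$. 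Then $\phi_P(a)$ is divisible by $Q$ but, being (a unit times) a prime of degree tending to infinity, cannot equal $Q$ — contradiction.

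For the remaining case, $a$ prime and $a \in \phi(A)_{\operatorname{Tors}}$, I would argue much as in the second paragraph of the proof of Proposition \ref{prop:mersennepaswieferich}. Since $a$ is torsion, there is $c \in A\setminus\{0\}$ with $\phi_c(a)=0$; then $\phi_P(a) = \phi_{P \bmod c}(a)$ (working modulo $c$ in the exponent), so as $P$ ranges over primes, $\phi_P(a)$ takes only finitely many values in $A$ — in particular only finitely many distinct primes can occur, so all but finitely many primes $P$ give a $\phi_P(a)$ that is not prime. (One must check $\phi_c(a)=0$ genuinely forces $\phi_P(a)$ to depend only on $P \bmod cA$: writing $P = cs + t$, $\phi_P(a) = \phi_{cs}(\phi_t? )$ — careful, $\phi$ is a ring hom so $\phi_P = \phi_c\phi_s + \phi_t$ additively, and $\phi_c(\text{anything applied to }a)$... actually $\phi_{cs}(a) = \phi_s(\phi_c(a))$? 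No: $\phi_{cs} = \phi_c \circ \phi_s = \phi_s \circ \phi_c$ as twisted polynomials, so $\phi_{cs}(a) = \phi_c(\phi_s(a))$, which need not vanish. The correct statement is that the map $A \to A/?$ ... I would instead use that $a \in \phi(A)_{\operatorname{Tors}}$ means the $A$-module map $A \to A$, $b \mapsto \phi_b(a)$ has nonzero kernel $I$, so it factors through the finite ring $A/I$, whence the image is finite.) This finiteness of $\{\phi_P(a)\}$ is the crux here.

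The main obstacle is the case $a \in \F_q^\ast$: there the values $\phi_P(a)$ are genuinely unbounded in degree, so one cannot argue by finiteness, and one must produce a fixed "obstructing" prime $Q$ dividing infinitely many Mersenne numbers. The heart of that step is the combination of Lemma \ref{pgcddrinfeld} (pairwise coprimality, which guarantees that if all but finitely many $\phi_P(a)$ were prime we would get infinitely many distinct primes, hence genuinely need an obstruction) with the pigeonhole/Dirichlet argument over $\F_q[\theta]$ to locate such a $Q$ and show it divides $\phi_P(a)$ for infinitely many primes $P$ while being of bounded degree. I would make sure the Dirichlet-type input — infinitude of primes in a fixed invertible residue class modulo a nonconstant polynomial of $\F_q[\theta]$ — is the standard one already invoked in the proof of Theorem \ref{th:granvilledrinfeld}.
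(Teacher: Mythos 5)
There is a genuine gap, and it sits exactly at the heart of the problem. In your main case $a\in\F_q^\ast$ you propose to find a fixed prime $Q$ dividing $\phi_P(a)$ for infinitely many primes $P$, by taking "$P$ in a fixed nonzero residue class modulo $\mathscr{P}_{\phi,Q,a}$". This is incorrect: by Definition \ref{def:pi}, $Q\mid\phi_P(a)$ if and only if $P\in\pi_a(\phi,Q)=\mathscr{P}_{\phi,Q,a}A$, i.e.\ $P$ lies in the \emph{zero} residue class modulo $\mathscr{P}_{\phi,Q,a}$; for $P$ in a nonzero class $c$ one only gets $\phi_P(a)\equiv\phi_c(a)\pmod{Q}$, which is nonzero. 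Since $a\in\F_q^\ast$ is not in $QA$, the polynomial $\mathscr{P}_{\phi,Q,a}$ is a non-unit, so the only prime it can divide is (an associate of) itself: a fixed $Q$ divides $\phi_P(a)$ for at most one prime $P$. This is in fact forced by the pairwise coprimality $\gcdd{\phi_{P_1}(a)}{\phi_{P_2}(a)}=\phi_1(a)=a$ that you yourself invoke from Lemma \ref{pgcddrinkeld!pgcddrinfeld}\,---\,no "obstructing prime" can exist, so the strategy cannot be repaired. (Your torsion case also has a non sequitur: that $\{\phi_P(a)\}$ takes finitely many values does not imply that all but finitely many $P$ yield a non-prime, since a single prime value could be attained by infinitely many $P$; this case is instead settled by the second part of Proposition \ref{prop:mersennepaswieferich}, which shows $\phi_P(a)$ is \emph{never} prime there.)

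The paper does not prove the theorem unconditionally: it explicitly assumes Conjecture \ref{conj} (a Koblitz-type statement: infinitely many primes $P$ with $g_{P,\phi}$ prime). Given such a $P$ with $g_{P,\phi}=Q$ prime and $\deg P>c_\phi$, Fermat's little theorem for $\phi$ gives $P\mid\phi_Q(1)$, while Lemma \ref{lem:degré} gives $\deg\phi_Q(1)>\deg Q=\deg P$, so $\phi_Q(1)$ has a proper prime factor and is not prime; letting $P$ vary produces infinitely many such $Q$. So the correct "obstructing prime" for the Mersenne number $\phi_Q(1)$ is not fixed but depends on $Q$ (it is the $P$ whose Fitting generator is $Q$), which is how the paper circumvents the coprimality obstruction; the price is the unproved conjecture.
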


By Proposition \ref{prop:mersennepaswieferich}, it suffices to prove the result for $a\in A-\phi(A)_{\operatorname{Tors}}$, and by Lemma \ref{lem:mersennepremier}, it suffices to prove it for $a\in \F_q^\times$. We prove it without loss of generality for $a=1$ satisfying $1\notin \phi(A)_{\operatorname{Tors}}$.

Unfortunately, we are only able to prove Theorem \ref{th:mersenne} by assuming the following conjecture.

\begin{thmB}\label{conj} Let $\phi:A\rightarrow A\{\tau\}$ be a Drinfeld module.
There exist infinitely many prime polynomials $P$ such that $g_{P,\phi}$ is prime.
\end{thmB}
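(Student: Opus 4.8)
My plan is to reduce the conjecture to a concrete irreducibility statement, attempt a density/equidistribution argument, and then identify honestly where such an argument breaks down --- which is exactly why the statement is left as a conjecture.

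First I would exploit the determinant formula
$$g_{P,\phi}=\det_{\F_q[X]}\left(X-\phi_\theta \ens A/PA\right)_{|X=\theta}.$$
The twisted polynomial $\phi_\theta$ acts $\F_q$-linearly on the residue field $\F_P:=A/PA\cong\F_{q^{\deg P}}$, so its characteristic polynomial $\chi_P(X)\in\F_q[X]$ is monic of degree $\deg P$, and $g_{P,\phi}$ is literally $\chi_P$ with the variable $X$ renamed $\theta$. Hence $g_{P,\phi}$ is prime in $A$ if and only if $\chi_P$ is irreducible over $\F_q$, and the conjecture becomes: for infinitely many primes $P$ the characteristic polynomial of $\phi_\theta$ on $A/PA$ is irreducible. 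Equivalently, since $g_{P,\phi}=P-r_{P,\phi}$ with $\deg r_{P,\phi}<\deg P$, we are asking that a perturbation of the irreducible polynomial $P$ by a term of strictly smaller degree again be irreducible infinitely often; this is the function field analogue of a shifted-prime problem. A second, equivalent viewpoint is that $g_{P,\phi}$ is the value at $1$ of the Weil (Frobenius) polynomial of the reduction $\phi\bmod P$, which is the object on which equidistribution theorems act.

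Next I would attempt a density argument. Among the $q^d$ monic polynomials of degree $d$ over $\F_q$, a proportion $\sim 1/d$ are irreducible, while there are $\sim q^d/d$ primes $P$ with $\deg P=d$. If the assignment $P\mapsto\chi_P$ were equidistributed among monic degree-$d$ polynomials --- or even merely met the irreducible locus with positive frequency --- one would obtain $\gg q^d/d^2$ primes of each large degree $d$ with $\chi_P$ irreducible, and summing over $d$ would give infinitely many. To make this precise I would organise the reductions $\phi\bmod P$ into a family and invoke a Chebotarev/Deligne--Katz-style equidistribution theorem for the associated Frobenius data, aiming to show that $\chi_P$ is not systematically confined to the reducible locus.

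The hard part --- and the reason the statement remains conjectural --- is precisely this last step, and the obstruction is structural rather than merely technical. Standard equidistribution controls factorisation type inside a \emph{fixed} ambient group, whereas here we demand irreducibility of the characteristic polynomial of an operator whose dimension $d=\deg P$ grows without bound; irreducibility of a degree-$d$ polynomial is an increasingly thin condition that equidistribution of a fixed monodromy group does not reach. Concretely, one must rule out the degeneration in which $\phi_\theta$ fails to act cyclically on $A/PA$ with irreducible minimal polynomial (for instance when its minimal polynomial drops below degree $\deg P$), and there is no general mechanism forcing genericity here; this mirrors Bunyakovsky/Mersenne-type problems, which are open. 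Finally, the analytic tools available in this paper, namely Mason--Stothers (Theorem \ref{th:abc}) and Samuel--Voloch (used in Lemma \ref{lem:mordell}), bound degrees, radicals and rational points but give no leverage on the irreducibility of $\chi_P$. Establishing the required equidistribution, or else constructing by hand an infinite family of $P$ on which $\phi_\theta$ is cyclic with irreducible minimal polynomial, is the crux I cannot presently resolve, which is why I record the statement only as a conjecture.
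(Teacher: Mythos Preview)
Your proposal is not a proof but an honest diagnosis of why the statement is left open, and in that sense it is correct: the paper does not prove this either. However, your discussion and the paper's treatment diverge in emphasis. You reformulate the problem via the $\F_q$-linear action of $\phi_\theta$ on $A/PA$, reduce to irreducibility of its characteristic polynomial, and then argue that equidistribution/monodromy techniques fail because the degree of $\chi_P$ grows with $\deg P$; this is a sound heuristic analysis and goes well beyond what the paper attempts. The paper, by contrast, offers no such analysis: it simply records the statement as the Drinfeld-module analogue of Koblitz's question on the primality of $\vert E(\F_p)\vert$ for elliptic curves, and then observes that the conjecture \emph{is} a theorem in the Carlitz case, since there $g_{P,\phi}=P-1$ and Hall's function-field twin-prime theorem supplies infinitely many primes $P$ with $P-1$ prime.

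You miss both of these points. The Koblitz analogy is worth mentioning because it situates the problem and signals the expected difficulty; the Carlitz case is more important still, since it shows the conjecture is not vacuous and gives the only unconditional instance available. Your equidistribution discussion, while interesting, does not recover either of these, and in particular your framework gives no hint that the rank-one case collapses to a classical sieve problem that is already solved over $\F_q[\theta]$.
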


\begin{rem}
    The previous Conjecture \ref{conj} is the analog of a question studied by Koblitz for elliptic curves, see \cite{koblitz1988primality}, that is as follows. Let $E$ be an elliptic curve defined over $\Q$. Are there infinitely prime numbers $p$ such that $\vert E(\F_p)\vert$ is prime ? 
\end{rem}

 We begin by stating the following lemma.

\begin{lem}\label{lem:degré} Let $\phi:A\rightarrow A\{\tau\}$ be a Drinfeld module of rank $r\geq 1$, such that $1\notin \phi(A)_{\operatorname{Tors}}$. Then there exists a constant $c_\phi>0$ such that
$$\forall a\in A, \deg a>c_\phi \Rightarrow \deg \phi_a(1)>\deg a.$$
\end{lem}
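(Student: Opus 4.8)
The plan is to analyze the Newton polygon / leading-term behavior of $\phi_a(1)$ as a polynomial in the coefficients of $a$. Write $\phi_\theta = \theta\tau^0 + \sum_{i=1}^r a_i\tau^i$ with $a_r\neq 0$. For $a\in A$ of degree $n$, the twisted polynomial $\phi_a$ has degree $rn$ in $\tau$, and its leading coefficient (the coefficient of $\tau^{rn}$) is a nonzero element of $A$; evaluating at $1$, the term $\phi_a(1)$ is a sum of contributions, the dominant one coming from the highest $\tau$-powers. First I would make precise the claim that $\deg_\theta$ of the leading $\tau$-coefficient of $\phi_a$ grows at least linearly in $\deg a$: if $a = \sum_{j=0}^n c_j\theta^j$ with $c_n\neq 0$, then one checks by induction that the top coefficient of $\phi_{\theta^j}$ has $\theta$-degree exactly $d_j$ for an explicit strictly increasing sequence $d_j$ (with $d_j - d_{j-1}$ eventually constant, equal to $q^{rj}\cdot(\text{something})$ governed by $a_r$), so that $\deg\phi_a(1)$ is controlled by $\max_j (d_j + \text{stuff})$, which exceeds $n = \deg a$ once $n$ is large.

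The key step is to isolate why $\deg\phi_a(1)$ cannot stay $\leq \deg a$: the hypothesis $1\notin\phi(A)_{\mathrm{Tors}}$ guarantees $\phi_a(1)\neq 0$ for all $a\neq 0$, so there is no cancellation making $\phi_a(1)$ vanish, but one needs more — that the genuinely large terms do not all cancel. I would argue via the $q$-adic valuation of exponents: $\phi_a(1) = \sum_i b_i(\theta)$ where $b_i$ comes from the $\tau^i$-coefficient of $\phi_a$, and the degrees of the various $b_i$ lie in arithmetic-progression-like sets with different "scales" (powers of $q$), so the top-degree contributions come from a controlled range of $i$ near $rn$ and their degrees are of size roughly $C\cdot q^{r n}$ or at least grow faster than $n$. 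Concretely: for $a=\theta^n$, a direct computation gives $\deg\phi_{\theta^n}(1) = \deg(\text{leading }\tau^{rn}\text{ coeff}) $, and this leading coefficient is $a_r^{1+q^r+\cdots+q^{r(n-1)}}$ up to lower-degree corrections, whose degree is $(\deg a_r)\cdot\frac{q^{rn}-1}{q^r-1}$, already $\gg n$; for general $a$ of degree $n$ the same top term dominates (the other monomials of $a$ contribute strictly smaller $\tau$-degree hence, after evaluation, strictly smaller $\theta$-degree in the dominant regime, or get absorbed).

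After establishing the degree estimate $\deg\phi_a(1) \geq f(\deg a)$ with $f(n)\to\infty$ faster than $n$ for $a$ a monomial or for $a$ with fixed top coefficient, I would upgrade to arbitrary $a$: since there are only finitely many top coefficients $c_n\in\F_q^\times$ and the correction terms are uniformly bounded relative to the dominant term, one gets a single constant $c_\phi$ such that $\deg a > c_\phi$ forces $\deg\phi_a(1) > \deg a$. The main obstacle I anticipate is handling potential cancellations among the several $\tau$-coefficients of $\phi_a$ when evaluated at $1$: a priori two high-degree terms $b_i(\theta)$ and $b_j(\theta)$ could have equal leading $\theta$-degree and cancel. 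I would rule this out by observing that the nonzero $b_i$ have leading degrees that are distinct modulo the relevant power-of-$q$ scaling — or, more robustly, by using $1\notin\phi(A)_{\mathrm{Tors}}$ together with an inductive argument on $\deg a$ showing $\deg\phi_{\theta a}(1) > \deg\phi_a(1)$ once $\deg a$ is large, since applying $\phi_\theta$ multiplies the top $\tau$-degree part by roughly $a_r\tau^r$, multiplying top degrees by $q^r$ plus $\deg a_r$, which eventually outpaces the linear growth of $\deg(\theta a)=\deg a+1$.
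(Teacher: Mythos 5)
Your main line of attack --- that the dominant contribution to $\phi_a(1)$ comes from the leading $\tau$-coefficient of $\phi_a$, whose $\theta$-degree you compute as $(\deg a_r)\cdot\frac{q^{rn}-1}{q^r-1}\gg n$ --- breaks down whenever $\deg a_r=0$, i.e.\ when the top coefficient of $\phi_\theta$ is a constant. The Carlitz module $C_\theta=\theta+\tau$ is exactly such a case: the leading $\tau^n$-coefficient of $C_{\theta^n}$ is $1$, of degree $0$, while $\deg C_{\theta^n}(1)=q^{n-1}$ comes from an intermediate coefficient. So the claimed identity $\deg\phi_{\theta^n}(1)=\deg(\text{leading }\tau^{rn}\text{ coefficient})$ is false in general, and the ``top term dominates'' analysis of your first two paragraphs does not go through. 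Your proposed repair via leading degrees being ``distinct modulo the power-of-$q$ scaling'' is also not available in general: already for $r=1$ and $\phi_\theta=\theta+a_1\tau$ with $\deg a_1=1$, the terms $a_1\theta^q$ and $a_1^{1+q}$ appearing in $\phi_{\theta^2}(1)$ both have degree $q+1$.

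Your fallback, however, is essentially the paper's proof and is the right route: one shows $\deg\phi_\theta(b)=q^r\deg b+\deg a_r$ as soon as $\deg b$ exceeds the explicit threshold $M=\max_{0\le i<r}\deg(a_i)/(q^r-q^i)$, and then iterates. Two points must be made precise for this to close. First, the correct hypothesis in the induction is that $\deg\phi_{\theta^n}(1)$ is large, not that $n$ (or $\deg a$) is large: the recursion $\deg\phi_{\theta^{n+1}}(1)=q^r\deg\phi_{\theta^n}(1)+\deg a_r$ only kicks in once $\deg\phi_{\theta^n}(1)>M$, and the non-torsion hypothesis is used precisely here --- $\{\phi_{\theta^n}(1)\}_{n}$ is an infinite set of polynomials, hence of unbounded degree, so the threshold is crossed at some finite $N$. (This is the only role of $1\notin\phi(A)_{\operatorname{Tors}}$; it is not about ruling out cancellation.) From $N$ on, $\deg\phi_{\theta^n}(1)$ grows like a constant times $q^{rn}$ and eventually exceeds $n$. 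Second, to pass from monomials to a general $a=\sum_j c_j\theta^j$ of degree $m\ge N$, use $\F_q$-linearity: $\phi_a(1)=\sum_j c_j\phi_{\theta^j}(1)$, and since the degrees $\deg\phi_{\theta^j}(1)$ are strictly increasing for $j\ge N$ and at most $M$ for $j<N$, the $j=m$ term strictly dominates, giving $\deg\phi_a(1)=\deg\phi_{\theta^m}(1)$. With these two points supplied, your inductive version becomes a complete proof and coincides with the one in the paper.
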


\begin{proof}

Write $\phi_\theta=\sum\limits_{i=0}^r a_i\tau^i\in A\{\tau\}$ with $a_r\neq 0$ and $a_0=0$.
Set
$$M=\max_{i=0,\ldots, r-1} \dfrac{\deg(a_i)}{q^r-q^i}>0.$$
Let $b\in A$ be such that $\deg b>M$. Then
\begin{equation}\label{eq:degré}\deg \phi_\theta(b)=q^r\deg b+\deg a_r>\deg(b).\end{equation}
Since $1\notin \phi(A)$, the set $\{\phi_{\theta^n}(1), n\in \N\}$ is infinite, and there exists a minimal integer $N$ such that $\deg \phi_{\theta^N}(1)>M$.
By Equation \eqref{eq:degré}, we deduce that the sequence $(\deg \phi_{\theta^n}(1))_{n\geq N}$ is strictly increasing and that for all $n\neq0$, 
$$\deg \phi_{\theta^{N+n}}(1)=q^{rn}\left(\deg \phi_{\theta^N}(1)+\dfrac{\deg a_r}{q^r-1}\right)-\dfrac{\deg a_r}{q^r-1}.$$
Thus, if $a\in A$, $\deg a=N+n$ with $n\geq 0$, then 
$$\deg \phi_a(1)=q^{rn}\left(\deg \phi_{\theta^N}(1)+\dfrac{\deg a_r}{q^r-1}\right)-\dfrac{\deg a_r}{q^r-1}$$ which is strictly greater than $N+n$ from some rank $N+n_0$, $n_0\in \N$. We then set $c_\phi=N+n_0$.
    
\end{proof}

\begin{proof}[Proof ot Theorem \ref{th:mersenne}]Assume Conjecture \ref{conj} to be true.

 Let $P\in A$ be such that $g_{P,\phi}$ is a prime number $Q$ and such that $\deg(P)>c_\phi$. In particular, $\deg Q=\deg g_{P,\phi}=\deg P>c_\phi$. Consider the element
 $$\phi_Q(1)=\phi_{g_{P,\phi}}(1)= 0 \pmod{P}.$$
 Thus, $P$ divides $\phi_Q(1)$, and by Lemma \ref{lem:degré} we have
 $$\deg \phi_Q(1)>\deg Q=\deg P.$$
Hence, the element $\phi_Q(1)$is not prime.
 \end{proof}

Note that Conjecture \ref{conj} is true for the Carlitz module. Indeed, in this case, by \cite[Theorem 3.6.3]{goss} for every $P$, we have $g_{P,\phi}=P-1$ and by a Hall theorem \cite{hall2006functions} there exist infinitely many primes $P$ such that both 
$P$ and $P-1$ are prime.

\bibliographystyle{abbrv}
\bibliography{ref}

\end{document}